\date{15 August 2014}
\newcommand{\la}{\langle}
\newcommand{\ra}{\rangle}
\newcommand{\x}{\times}
\newcommand{\ox}{\otimes}
\newcommand{\too}{\longrightarrow}
\newcommand{\SD}{{\mathcal{D}}}
\newcommand{\SM}{{\mathcal{M}}}
\newcommand{\SK}{{\mathcal{K}}}
\newcommand{\SF}{{\mathcal{F}}}
\newcommand{\ZZ}{\mathbb{Z}}
\newcommand{\CP}{\mathbb{C}P}
\newcommand{\RR}{\mathbb{R}}
\newcommand{\SU}{\operatorname{SU}}
\DeclareMathOperator{\coker}{coker}
\DeclareMathOperator{\Id}{Id}
\DeclareMathOperator{\Sym}{Sym}
\DeclareMathOperator{\Ant}{Ant}
\newtheorem{theorem}{Theorem}
\newtheorem{proposition}[theorem]{Proposition}
\newtheorem{definition}[theorem]{Definition}
\newtheorem{corollary}[theorem]{Corollary}
\title{Simply connected K-contact and Sasakian manifolds of dimension $7$}
\author[V. Mu\~{n}oz]{Vicente Mu\~{n}oz}
\address{Facultad de Ciencias Matem\'aticas, Universidad
Complutense de Madrid, Plaza de Ciencias
3, 28040 Madrid, Spain}
\address{Instituto de Ciencias Matem\'aticas (CSIC-UAM-UC3M-UCM),
C/ Nicol\'as Cabrera 15, 28049 Madrid, Spain}
\email{vicente.munoz@mat.ucm.es}
\author[A. Tralle]{Aleksy Tralle}
\address{Department of Mathematics and Computer Science, University of Warmia
and Mazury, S\l\/oneczna 54, 10-710, Olsztyn, Poland}
\email{tralle@matman.uwm.edu.pl}
\subjclass[2010]{53C25, 53D35, 57R17, 55P62}
\keywords{Sasakian manifold, contact structure, symplectic manifold, formality}
\begin{document}

\begin{abstract}
 We construct a compact simply connected $7$-dimensional manifold admitting a K-contact
structure but not a Sasakian structure. We also study rational homotopy properties of
such manifolds, proving in particular that a simply connected $7$-dimensional Sasakian manifold
has vanishing cup product $H^2 \x H^2 \to H^4$ and that it is formal if and only if all its
triple Massey products vanish.
\end{abstract}

\maketitle

\section{Introduction}\label{sec:1}

Sasakian geometry has become an important and active subject, especially after the appearance of
the fundamental treatise of Boyer and Galicki \cite{BG}. Chapter 7 of this
book contains an extended discussion of the topological problems in the theory of Sasakian, and, more generally,
K-contact manifolds. These are odd-dimensional analogues to K\"ahler and symplectic manifolds, respectively. 

The precise definition is as follows.
Let $(M,\eta)$ be a co-oriented contact manifold with a contact form
$\eta\in \Omega^1(M)$, that is $\eta\wedge (d\eta)^n>0$ everywhere, with $\dim M=2n+1$. 
We say that $(M,\eta)$ is {\em K-contact} if there is an
endomorphism $\Phi$ of $TM$ such that:
 \begin{itemize}
\item $\Phi^2=-\Id + \xi\otimes\eta$, where $\xi$ is the Reeb
vector field of $\eta$ (that is $i_\xi \eta=1$, $i_\xi (d\eta)=0$),
\item the contact form $\eta$ is compatible with $\Phi$ in the sense that
$d\eta (\Phi X,\Phi Y)\,=\,d\eta (X,Y)$, for all vector fields $X,Y$,
\item $d\eta (\Phi X,X)>0$ for all nonzero $X\in \ker \eta$, and
\item the Reeb field $\xi$ is Killing with respect to the
Riemannian metric defined by the formula
 $g(X,Y)\,=\,d\eta (\Phi X,Y)+\eta (X)\eta(Y)$.
\end{itemize}
In other words, the endomorphism $\Phi$ defines a
complex structure on $\SD=\ker \eta$ compatible with $d\eta$, hence
$\Phi$ is orthogonal with respect to the metric
$g|_\SD$. By definition, the Reeb vector
field $\xi$ is orthogonal to $\ker \eta$, and it is a 
Killing vector field.

Let $(M,\eta,g,\Phi)$ be a K-contact manifold. Consider the contact cone as the Riemannian manifold
$C(M)=(M\times\mathbb{R}^{>0},t^2g+dt^2)$.
One defines the almost complex structure $I$ on $C(M)$ by:
 \begin{itemize}
\item $I(X)=\Phi(X)$ on $\ker\eta$,
\item $I(\xi)=t{\partial\over\partial t},\,I(t{\partial\over\partial t})=-\xi$, for the Killing vector field $\xi$ of $\eta$.
\end{itemize}
We say that $(M,\eta,\Phi,g,I)$ is  {\it Sasakian} if $I$ is integrable.
Thus, by definition, any Sasakian manifold is K-contact.

There are several topological obstructions to the existence of the aforementioned structures 
on a compact manifold $M$ of dimension $2n+1$, for example:
 \begin{enumerate}
\item the evenness of the $p$-{th} Betti number for $p$ odd with $1\, \leq\, p \, \leq\, n$, of a Sasakian manifold,
\item some torsion obstructions in dimension $5$ discovered by Koll\'ar \cite{Kollar},
\item the fundamental group of Sasakian manifolds are special,
\item the cohomology algebra of a Sasakian manifold satisfies the hard Lefschetz property,
\item formality properties of the rational homotopy type.
\end{enumerate}

An early result \cite{Betti} establishes that the odd Betti numbers up to the middle dimension of Sasakian manifolds 
must be even. The parity of $b_1$ was used to produce the first examples of K-contact manifolds with no Sasakian structure
\cite[example 7.4.16]{BG}. More refined tools are needed in the case of even Betti numbers. The 
cohomology algebra of a Sasakian manifold satisfies a hard Lefschetz property \cite{CNY}. Using
it examples of K-contact non-Sasakian manifolds are produced in \cite{CNMY} in dimensions $5$ and $7$. These
examples are nilmanifolds with even Betti numbers, so in particular they are not simply connected.

The fundamental group can also be used to construct K-contact non-Sasakian manifolds.
Fundamental groups of Sasakian manifolds are called Sasaki groups, and satisfy
strong restrictions. Using this it is possible to construct (non-simply connected) compact 
manifolds which are K-contact but not Sasakian \cite{Chen}. 

When one moves to the case of simply connected manifolds, K-contact non-Sasakian 
examples of any dimension $\geq 9$ were constructed in \cite{HT} using the evenness of the third Betti
number of a compact Sasakian manifold. Alternatively, using the hard Lefschetz property
for Sasakian manifolds there are examples \cite{Lin} of simply connected K-contact non-Sasakian
manifolds of any dimension $\geq 9$.

In \cite{Tievsky} and in \cite{BFMT} the rational homotopy type of Sasakian manifolds is studied.
In \cite{BFMT} it is proved that all higher order Massey products for simply connected Sasakian manifolds vanish,
although there are Sasakian manifolds with non-vanishing triple Massey products.
This yields examples of simply connected K-contact non-Sasakian manifolds in dimensions $\geq 17$. 
However, Massey products are not suitable for the analysis of lower dimensional manifolds.

Hence, the problem of the existence of simply connected K-contact non-Sasakian compact manifolds 
(open problem 7.4.1 in \cite{BG}) is still open in dimensions $5$ and $7$. 
Dimension $5$ is the most difficult one, and it is treated in \cite{BG} separately.  Here one
has to use the obstructions of \cite{Kollar} which are very subtle torsion obstructions associated to the
classification of K\"ahler surfaces.
By definition, a simply connected compact oriented $5$-manifold is called a {\it Smale-Barden manifold}. 
These manifolds are classified topologically by $H_2(M,\mathbb{Z})$ and the second Stiefel-Whitney class. 
Chapter 10 of the book by Boyer and Galicki is devoted to a description of some Smale-Barden manifolds which 
carry Sasakian structures. The following problem is still open (open problem 10.2.1 in \cite{BG}). 

{\it Do there exist Smale-Barden manifolds which carry K-contact but do not carry Sasakian structures?}

In this note we solve the described problem in the easier case of dimension $7$ 
(the solution is still possible by means of homotopy theory combined with symplectic surgery).

\begin{theorem} \label{thm:main}
There exist $7$-dimensional compact simply connected K-contact manifolds which do not admit a Sasakian structure.
\end{theorem}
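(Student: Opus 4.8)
The plan is to construct a simply connected $7$-manifold that carries a K-contact structure but whose cohomology violates one of the Sasakian obstructions listed in the introduction. Since Betti number parity and fundamental group are unavailable in the simply connected $7$-dimensional setting, the natural invariant to exploit is the hard Lefschetz property together with the cup product structure. In fact, the abstract announces that a simply connected $7$-dimensional Sasakian manifold must have vanishing cup product $H^2 \x H^2 \to H^4$; my strategy is therefore to build a K-contact manifold whose cup product $H^2 \x H^2 \to H^4$ is \emph{nonzero}, which immediately rules out a Sasakian structure.

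The construction I would carry out proceeds via the Boothby--Wang correspondence, which realizes a K-contact manifold as a principal $S^1$-bundle over a symplectic orbifold (or manifold) whose symplectic form represents an integral cohomology class. First I would produce a $6$-dimensional simply connected symplectic manifold $(X,\omega)$ with $[\omega] \in H^2(X;\ZZ)$ and with a nontrivial cup product on $H^2$; for instance a suitable blow-up or a product such as a symplectic manifold built from projective-algebraic pieces, arranged so that two degree-$2$ classes multiply to a nonzero degree-$4$ class. Then the total space $M$ of the circle bundle over $X$ with Euler class $[\omega]$ carries a natural K-contact structure (the connection form is the contact form, and the almost complex structure on $\ker\eta$ is pulled back from the compatible almost complex structure on $X$). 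I would check that $M$ is simply connected and $7$-dimensional, and compute $H^*(M)$ via the Gysin sequence of the bundle to verify that the cup product $H^2(M) \x H^2(M) \to H^4(M)$ remains nonzero after quotienting by the ideal generated by the Euler class.

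The main obstacle is the simultaneous control of two competing requirements on the base $X$: the cup product $H^2 \x H^2 \to H^4$ of $M$ must survive, while $M$ must still admit the full K-contact structure and be genuinely non-Sasakian rather than merely failing one naive test. Concretely, in the Gysin sequence the map $\cup [\omega] : H^2(X) \to H^4(X)$ kills one class, and I must ensure that the surviving part of $H^2(X)$ still carries a nonzero product into the surviving part of $H^4(X)$. This is a linear-algebra condition on the cohomology ring of $X$ relative to the line spanned by $[\omega]$, and choosing $X$ so that it holds — while keeping $[\omega]$ integral and $X$ simply connected — is the delicate point; symplectic blow-up or fiber-connected-sum surgery is the tool I expect to need to engineer the required ring structure. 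Once $M$ is in hand, the non-existence of a Sasakian structure follows formally from the vanishing-cup-product theorem stated above, so no further hard analysis is required beyond the ring computation.
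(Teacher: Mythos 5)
Your overall architecture (a Boothby--Wang fibration over a carefully chosen simply connected symplectic $6$-manifold) matches the paper's, but the obstruction you invoke and the way you plan to realize it both leave genuine gaps. The paper rules out Sasakian structures on the total space $E$ by showing that $b_3(E)$ is odd, using only the classical theorem of Fujitani that the odd Betti numbers of a Sasakian manifold up to the middle dimension are even; the Gysin sequence gives $b_3(E)=b_3(X)+\dim\ker(L_\omega:H^2(X)\to H^4(X))$, so everything reduces to manufacturing a base with \emph{odd-dimensional} Lefschetz kernel (the Gompf--Cavalcanti manifold, obtained from Gompf's non-hard-Lefschetz example by a Cavalcanti symplectic blow-up along a symplectic surface produced by Donaldson--Auroux techniques). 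You instead invoke the statement that a simply connected Sasakian $7$-manifold has vanishing cup product $H^2\times H^2\to H^4$. That statement is Corollary~\ref{cor:2} of this very paper, and it is not free: its proof requires approximating the Sasakian structure by a quasi-regular one (Proposition~\ref{prop:quasi-regular}), building the rational model of the resulting orbifold circle fibration (Proposition~\ref{prop:model}, which in turn uses formality of simply connected $6$-dimensional orbifolds), and the hard Lefschetz theorem for K\"ahler orbifolds. Quoting it as an input makes your argument circular relative to a self-contained proof; establishing it is a larger undertaking than the parity-of-$b_3$ route, which rests only on a classical cited result.

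The second gap is the base manifold. You need a simply connected symplectic $(X,\omega)$ with $[\omega]$ integral and primitive such that some product $a\cup b$ with $a,b\in H^2(X)$ does \emph{not} lie in $\omega\cdot H^2(X)$. This forces $L_\omega:H^2(X)\to H^4(X)$ to fail surjectivity, so $X$ cannot satisfy hard Lefschetz and in particular cannot be K\"ahler --- your suggestion of assembling $X$ from ``projective-algebraic pieces'' is self-defeating, since a K\"ahler base yields a regular Sasakian (hence, by the very obstruction you are using, cup-product-trivial) total space. Moreover, failure of hard Lefschetz alone does not guarantee your condition: the image of $H^2(X)\otimes H^2(X)\to H^4(X)$ could still be contained in $\operatorname{im} L_\omega$ even when $L_\omega$ is not onto, so you must exhibit a concrete example and verify this linear-algebra condition on its cohomology ring. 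The paper's condition (odd Lefschetz kernel) is arranged by an explicit surgery that changes $\dim\ker L_\omega$ by exactly one; your condition would need its own construction and verification, which the proposal does not supply.
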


We then turn around  to the study of the rational homotopy type of K-contact and Sasakian simply connected manifolds
of dimension $7$. In particular, we prove:

\begin{corollary} \label{cor:2}
 Let $M$ be a simply connected compact K-contact $7$-dimensional manifold. Suppose
that the cup product map $H^2(M)\x H^2(M)\too H^4(M)$ is non-zero. Then $M$ does
not admit a Sasakian structure.
\end{corollary}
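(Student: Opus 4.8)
The plan is to prove the contrapositive: I will show that \emph{every} simply connected compact $7$-dimensional Sasakian manifold has trivial cup product $H^2(M)\x H^2(M)\too H^4(M)$, so that the non-vanishing hypothesis forces $M$ to be non-Sasakian. This reduces the corollary to a purely algebraic statement about the rational homotopy type, which I would extract from Tievsky's model of a Sasakian manifold.

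First I would recall the model. For a compact Sasakian manifold the Reeb foliation is transversely K\"ahler, and $d\eta$ is basic, defining a transverse K\"ahler class $\omega=[d\eta]\in H_B^2(M)$ in the basic cohomology $H_B^*(M)$. Tievsky's theorem \cite{Tievsky} provides a commutative differential graded algebra model $(H_B^*(M)\ox\Lambda(t),d)$ with $\deg t=1$, $dt=\omega$, and $d=0$ on $H_B^*(M)$; here $H_B^*(M)$ is finite dimensional and, being the cohomology of a transverse K\"ahler structure of real dimension $2n=6$, satisfies the hard Lefschetz property with respect to $\omega$ \cite{CNY}. This is the input I would take as given, as it is the substance of the rational homotopy analysis and of the hard Lefschetz theorem for Sasakian manifolds.

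Next I would run the bookkeeping in this model. Simple connectivity gives $H^1(M)=0$, hence $H_B^1(M)=0$, so every degree-two cohomology class is represented by a closed basic $2$-form and $H^2(M)=H_B^2(M)/\la\omega\ra$. The decisive point is in degree four: the image of $d$ on degree-three elements contains $d(t\beta)=\omega\,\beta$ for all $\beta\in H_B^2(M)$, that is, it contains $L(H_B^2(M))$, where $L=\omega\cdot(-)$. Because $n=3$, the hard Lefschetz isomorphism in degree two is precisely $L=L^{n-2}\colon H_B^2(M)\too H_B^4(M)$, so $L$ is surjective and $\omega\cdot H_B^2(M)=H_B^4(M)$. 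Therefore \emph{every} element of $H_B^4(M)$ is exact in the model.

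Finally I would conclude. Given $[\alpha_1],[\alpha_2]\in H^2(M)$ represented by $\alpha_i\in H_B^2(M)$, their cup product is represented by $\alpha_1\alpha_2\in H_B^4(M)$, which is exact by the previous step; hence $[\alpha_1]\cup[\alpha_2]=0$. This shows the cup product map vanishes on any simply connected $7$-dimensional Sasakian manifold, and the corollary follows by contraposition. The main obstacle, and the place where dimension $7$ is genuinely used, is the surjectivity of $L\colon H_B^2(M)\too H_B^4(M)$: it is exactly the $n=3$ hard Lefschetz isomorphism, whereas in higher dimensions $L$ in this degree is merely injective, so $H_B^4(M)$ need not be exhausted by exact classes and the argument breaks down.
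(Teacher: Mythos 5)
Your argument is correct, and it establishes the sharper statement announced in the paper's abstract: the cup product $H^2(M)\x H^2(M)\too H^4(M)$ vanishes identically for every simply connected compact Sasakian $7$-manifold, so the corollary follows by contraposition. The route is genuinely different from the paper's, although the final algebra is the same. The paper does not invoke Tievsky's model directly: it first quasi-regularizes the Sasakian structure via \cite{OV}, realizes $M$ as an orbifold circle bundle $S^1\to M\to B$ over a simply connected compact K\"ahler orbifold $B$ of dimension $6$, uses formality of such orbifolds (Theorem \ref{fm2:criterio2}) to obtain the model $(H^*(B)\ox\bigwedge(x),\,dx=\omega)$ of Proposition \ref{prop:model}, and then quotes the orbifold hard Lefschetz theorem \cite{WZ} to conclude that $Q^4=\coker\left(L_\omega:H^2(B)\to H^4(B)\right)$, through which the cup product on $H^2(M)$ factors, must vanish. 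You instead take Tievsky's model $(H_B^*(M)\ox\Lambda(t),\,dt=\omega)$ on the basic cohomology of the Reeb foliation together with the transverse hard Lefschetz property; in the quasi-regular case $H_B^*(M)\cong H^*(B)$, so the two models agree, but your version bypasses quasi-regularization and the orbifold formality discussion entirely, at the cost of importing Tievsky's theorem as a black box. One bibliographical caveat: the transverse hard Lefschetz property for basic cohomology that you need is due to El Kacimi-Alaoui, whereas \cite{CNY} proves the induced Lefschetz-type statement on $H^*(M)$ itself, so the attribution should be adjusted. Both arguments use dimension $7$ in exactly the same place, namely the surjectivity of $L:H^2\to H^4$ in (transverse) real dimension $6$.
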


Formality is a very useful rational homotopy property that has been widely used
to distinguish between symplectic and K\"ahler manifolds \cite{TO} (see Section \ref{sec:4} for
definitions and details). Simply connected compact manifolds of dimension $\leq 6$ are 
always formal, so formality becomes interesting in dimension $7$. We study this
property in detail giving a precise characterisation for Sasakian manifolds (see
Theorem \ref{thm:formal-7-dim}). In particular, we have the following:

\begin{corollary} \label{cor:formal-7-dim}
   Let $M$ be a simply connected compact Sasakian $7$-dimensional manifold.
 Then $M$ is formal if and only if all triple Massey products are zero.
\end{corollary}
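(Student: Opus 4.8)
The ``only if'' direction is the standard fact that a formal space has vanishing Massey products of every order: formality means the de Rham model is joined by quasi-isomorphisms to $(H^*(M),0)$, and all Massey products are preserved under quasi-isomorphism and obviously vanish in a differential graded algebra with zero differential. So the real content is the converse, which I would deduce from $3$-formality. Recall the theorem of Fern\'andez and Mu\~noz (see \cite{TO} for the $s$-formality machinery) that a compact orientable manifold of dimension $2n-1$ or $2n$ is formal if and only if it is $(n-1)$-formal; for $\dim M=7$ this reads: formal $\Leftrightarrow$ $3$-formal. Since $M$ is simply connected, $V^1=0$ in its minimal model $(\bigwedge V,d)$, so every degree-$2$ generator is closed ($N^2=0$) and $M$ is automatically $2$-formal. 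Thus it suffices to produce a decomposition $V^3=C^3\oplus N^3$ (with $dC^3=0$ and $d|_{N^3}$ injective) for which every closed element of the ideal $I(N^3)\subset\bigwedge(V^2\oplus V^3)$ is exact in $\bigwedge V$.

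First I would fix the model through degree $3$. One has $V^2\cong H^2(M)$ with $d=0$, so $(\bigwedge V)^4=\Sym^2V^2$ at this stage. By Corollary \ref{cor:2} the cup product $H^2\times H^2\to H^4$ vanishes, hence every quadratic monomial in $V^2$ is exact and $d\colon N^3\xrightarrow{\ \cong\ }\Sym^2V^2$, while $C^3\cong H^3(M)$ collects the closed generators. I would then organise the verification of the ideal condition by total degree. In degrees $\le 4$ the ideal $I(N^3)$ has no nonzero closed element, since (as $V^1=0$) it starts with $N^3\cdot V^2$ in degree $5$. In degree $6$ and in all degrees $\ge 8$ every closed element is automatically exact, because $H^6(M)=0$ and $H^{\ge 8}(M)=0$. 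In degree $5$ the closed elements of $I(N^3)$, taken modulo exact ones, are spanned by the representatives $n_{\alpha\beta}\,\gamma-\alpha\,n_{\beta\gamma}$ of the triple Massey products $\langle\alpha,\beta,\gamma\rangle$ with $\alpha,\beta,\gamma\in H^2$; here the freedom to alter $N^3$ by a linear map into $C^3$ realises precisely the Massey indeterminacy $\alpha H^3+H^3\gamma$. A short check by degrees shows that every other type of triple Massey product lands in $H^6=0$ or has full indeterminacy in $H^7$ (via the perfect pairings $H^2\times H^5$ and $H^3\times H^4$), hence vanishes automatically; so the hypothesis that all triple Massey products vanish is equivalent to the exactness of the degree-$5$ part of the ideal.

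The remaining and decisive case is the top degree $7$, where $H^7(M)\cong\QQ$ and exactness is \emph{not} automatic: I must show that, once the degree-$5$ obstructions are killed, no closed element of $I(N^3)$ of top degree represents a nonzero multiple of the fundamental class. This is the crux, and I expect the entire genuine difficulty to be concentrated here. The plan is to exploit Poincar\'e duality together with the Koszul structure of $(\bigwedge(V^2\oplus V^3),d)$, in which $d$ acts only on $N^3$ by $n\mapsto dn\in\Sym^2V^2$: filtering by the number of $N^3$-factors, one analyses a closed top-degree element divisible by $N^3$ and shows that its pairing with the fundamental class must factor, through the perfect pairings above, as a product of \emph{closed} generators, equivalently through a degree-$5$ closed ideal element, which is exact by the previous step. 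Concretely, the fundamental class is a sum of products in $H^2\cdot H^5$ and $H^3\cdot H^4$, and the relation $H^2\cdot H^2=0$ removes the only mechanism by which a factor from $N^3$ could enter such a representative; hence the fundamental class admits no representative in $I(N^3)$.

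Alternatively, and more in keeping with the Sasakian hypothesis, I would run the same analysis inside Tievsky's model $(H_B^*\otimes\bigwedge(x),D)$ of the Sasakian manifold \cite{Tievsky}, using that the basic cohomology $H_B^*$ satisfies hard Lefschetz and that by \cite{BFMT} all Massey products of order $\ge 4$ vanish for simply connected Sasakian manifolds. This confines the only possible obstruction to formality in dimension $7$ to the triple products, and makes the top-degree computation explicit through the Gysin structure of the model. Either route reduces $3$-formality to the vanishing of the degree-$5$ ideal elements, i.e.\ to the vanishing of all triple Massey products, which together with the Fern\'andez--Mu\~noz reduction yields formality.
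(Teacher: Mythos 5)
Your overall strategy---reduce to $3$-formality via Theorem \ref{fm2:criterio2}, build the minimal model through degree $3$ using the vanishing of the cup product $H^2\x H^2\to H^4$ supplied by Corollary \ref{cor:2}, and then analyse closed elements of the ideal $I(N^3)$ degree by degree with everything concentrated in degrees $5$ and $7$---is exactly the skeleton of the paper's argument (Theorem \ref{thm:formal-7-dim} together with its corollary). But as written there are two gaps. First, the degree-$7$ case, which you yourself call ``the crux'', is left as a plan: the sentence ``the relation $H^2\cdot H^2=0$ removes the only mechanism by which a factor from $N^3$ could enter such a representative'' is not a proof that a $d$-closed element of $N^3\cdot\Sym^2 V^2$ must be exact. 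The paper carries out this computation explicitly: the closed degree-$7$ part of the ideal is $\SK_M\oplus\Ant^2(\Sym^2P)$, the second summand is exact (it is $d$ of $\bigwedge^2N^3$), and the obstruction on $\SK_M$ is the explicit form $\SF_M$ given by $(\alpha\cdot\beta)\cdot(\gamma\cdot\delta)\mapsto\int_M L_\omega^{-1}(\alpha\wedge\beta)\wedge\gamma\wedge\delta$, whose identification with the pairing $\la e_i,e_j,e_k\ra\cup e_l$ is then imported from Crowley--Nordstr\"om \cite{Johannes} rather than rederived.

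Second, and more substantively, your degree-$5$ assertion that ``the hypothesis that all triple Massey products vanish is equivalent to the exactness of the degree-$5$ part of the ideal'' glosses over the indeterminacy. That $\la\alpha,\beta,\gamma\ra$ contains $0$ only says the class of $n_{\alpha\beta}\gamma-\alpha n_{\beta\gamma}$ lies in $\alpha H^3+H^3\gamma$; the $s$-formality condition requires one fixed splitting $V^3=C^3\oplus N^3$ killing all these classes simultaneously, and the available freedom is a single linear map $\Sym^2P\to H^3$, which need not realise the indeterminacies of all triples independently. The paper sidesteps this by working in degree $7$, where the obstruction is splitting-independent (changing the splitting perturbs $\rho$ only by terms in $H^3\cdot H^2\cdot H^2=0$) and where $\la e_i,e_j,e_k\ra\cup e_l$ is a well-defined number because the indeterminacy cups to zero against $H^2$ for the same reason; Poincar\'e duality $H^5(M)\x H^2(M)\to H^7(M)$ then disposes of the degree-$5$ classes afterwards. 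Your plan is salvageable, but the order of deduction should be reversed: settle degree $7$ first and obtain degree $5$ as a consequence, not the other way around.
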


\medskip

\noindent\textbf{Acknowledgements.} We thank Johannes  Nordstr\"om for several illuminating
conversations and for providing us with a copy of \cite{Johannes}. We are also
grateful to G. Bazzoni and B. Cappelletti-Montano for pointing us some references. The first
author was partially supported by Project MICINN (Spain) MTM2010-17389. 
The second author was partly supported by the ESF Research Networking 
Programme CAST (Contact and Symplectic Topology).

\section{Gompf-Cavalcanti manifold}\label{sec:2}

Let $(M,\omega)$ be a symplectic manifold of dimension $2n$. For every $0\leq k \leq n$,
we define the Lefschetz
map as $L_\omega:H^{n-k}(M) \to H^{n+k}(M)$, $L_\omega([\beta])=[\beta\wedge\omega^{n-k}]$.
We say that $M$ satisfies the hard Lefschetz property if $L_\omega$ is an isomorphism for
every $0 \leq k \leq n$.

\begin{proposition} \label{prop:GC}
There exists a simply connected $6$-dimensional symplectic manifold $(M,\omega)$ such that
$\dim \ker (L_\omega: H^2(M)\rightarrow H^4(M))$ is odd.
\end{proposition}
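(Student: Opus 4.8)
The plan is to realize the failure of hard Lefschetz through an explicit symplectic surgery, while keeping tight control over the resulting symmetric form on $H^2$. First I would reformulate the statement in linear-algebra terms. With $\dim M=6$ the relevant map is $L_\omega\colon H^2(M)\too H^4(M)$, $[\beta]\mapsto[\beta\wedge\omega]$, and under the Poincar\'e duality identification $H^4(M)\cong H^2(M)^*$ this is equivalent to the symmetric bilinear form $Q(\alpha,\beta)=\int_M\alpha\wedge\beta\wedge\omega$ on $H^2(M)$. Hence $\ker(L_\omega)$ is precisely the radical of $Q$, and the proposition amounts to producing a simply connected symplectic $6$-manifold whose form $Q$ is degenerate with radical of odd dimension. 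Observe that $\omega$ itself never lies in this radical, since $Q(\omega,\omega)=\int_M\omega^3>0$; the form is never identically zero, so all the content is in the odd parity of the null space.

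Second, I would start from a simply connected symplectic $6$-manifold that satisfies hard Lefschetz ---  such as $\CP^3$ or $\CP^2\x\CP^1$ with a K\"ahler form --- and destroy the property by symplectic surgery. Two complementary tools are available here. Gompf's symplectic fiber sum along codimension-$2$ symplectic submanifolds produces new simply connected symplectic manifolds with prescribed cohomology, while Cavalcanti's analysis of symplectic blow-ups along symplectic submanifolds computes exactly how $L_\omega$ is modified. Blowing up a simply connected $X$ along a connected symplectic surface $\Sigma$ keeps the total space simply connected (a small loop around $\Sigma$ bounds a disc in the exceptional $\CP^1$-fibre) and enlarges $H^2$ by the class $e=[E]$ of the exceptional divisor, with the ring structure dictated by the normal bundle of $\Sigma$ and the cohomology of $\Sigma$ itself.

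Third, I would compute $Q$ on the enlarged $H^2$ of the surgered manifold $M$. The classes inherited from the original hard-Lefschetz manifold contribute a nondegenerate block and so do not affect the parity of the radical; the new classes built from $e$ and from the data of $\Sigma$ glued in by the sum contribute a block whose degeneracy is governed by the topology of $\Sigma$ and by the chosen gluing. By selecting these building blocks so that the new block carries a single unpaired degenerate direction --- a class $e$ with $\int_M e\wedge\beta\wedge\omega=0$ for every $\beta\in H^2(M)$ --- I would arrange $\dim\ker(L_\omega)=1$, and then verify simple connectivity by van Kampen.

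The main obstacle is the \emph{parity}, not the mere degeneracy: generic surgeries tend to enlarge the radical in pairs, so hard Lefschetz typically fails in even codimension, which is useless here. The delicate point is to inject exactly one unpaired degenerate direction into $Q$, and this forces a careful choice of the submanifolds, their normal data, and the Gompf gluing, so that the resulting symmetric form on $H^2(M)$ has \emph{odd}-dimensional radical while $M$ remains closed, symplectic, and simply connected. Checking these three conditions simultaneously, together with the explicit evaluation of all the triple products $\int_M\alpha\wedge\beta\wedge\omega$, is where the real work lies.
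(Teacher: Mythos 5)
Your reduction of the statement to the parity of the radical of $Q(\alpha,\beta)=\int_M\alpha\wedge\beta\wedge\omega$ is correct, and you have named the right tools (Gompf's surgery and Cavalcanti's blow-up), but the proposal stops exactly where the proof has to begin. You propose to start from a hard Lefschetz manifold such as $\CP^3$ and engineer, by a ``careful choice'' of submanifolds and gluing data, a radical of dimension exactly one; you then concede that verifying this --- the explicit evaluation of all triple products on the surgered manifold --- ``is where the real work lies.'' That work is not optional: controlling the full cup product structure of a Gompf fiber sum well enough to guarantee a one-dimensional radical is nowhere carried out, and your own observation that surgeries tend to degenerate $Q$ in even-dimensional blocks is raised but never answered. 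As written, the proposal is a statement of the difficulty rather than a resolution of it.

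The paper's proof sidesteps this by inverting the logic. It takes Gompf's simply connected $6$-manifold that already fails hard Lefschetz (so $\ker L_\omega\neq 0$, of unknown parity) and, if the kernel happens to be even-dimensional, applies Cavalcanti's Lemma 2.4: blowing up along a symplectic surface $S$ with $\la a,[S]\ra\neq 0$ for some $a\in\ker L_\omega$ drops $\dim\ker L_\omega$ by exactly one, flipping the parity. This requires no control over the whole form $Q$, only the existence of one suitable surface. The genuinely nontrivial ingredient --- entirely absent from your proposal --- is producing that surface: the paper chooses $b\in H^4(M,\ZZ)$ with $a\cup b\neq 0$, realizes $2b$ as $c_2$ of a rank $2$ complex bundle via an obstruction-theory argument using $\pi_5(B\SU(2))=\ZZ_2$, and then invokes the Donaldson--Auroux asymptotically holomorphic technique to obtain a symplectic surface in the class $c_2(E\ox L^{\ox k})$, which pairs with $a$ as $2\la a,b\ra\neq 0$. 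Without this step, or a substitute for it, the blow-up cannot be performed along a surface that the kernel class sees, and the parity cannot be corrected; so the gap in your argument is not a routine verification but the central construction.
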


\begin{proof}
Gompf  constructs in \cite[Theorem 7.1]{Gompf}  an example of a simply connected $6$-dimensional symplectic manifold $(M,\omega)$
which does not satisfy the hard Lefschetz property, that is,
the Lefschetz map $L_\omega: H^2(M)\rightarrow H^4(M)$ is not an isomorphism. 
If $\dim \ker L_\omega$ is already odd then we have finished. 

So let us suppose that $\dim \ker L_\omega$ is even. Take  a 
cohomology class $a\in H^2(M)$ which belongs to the kernel of   $L_\omega$.
In \cite[Lemma 2.4]{C} Cavalcanti proves that given a symplectic manifold $(M,\omega)$ as above satisfying that there
exists a symplectic surface $S\hookrightarrow M$ with $\la a,[S]\ra \neq 0$, then there is another $6$-dimensional 
symplectic manifold $(M',\omega')$ (the symplectic blow-up of $M$ along $S$) satisfying
 $$
 \dim \ker (L_{\omega'}: H^2(M')\rightarrow H^2(M'))= \dim \ker (L_\omega: H^2(M)\rightarrow H^2(M))-1.
 $$ 
The symplectic blow-up of $M$ along $S$ is constructed in \cite{McDuff}, where it is proved that 
the fundamental groups $\pi_1(M')\cong \pi_1(M)$,
hence $M'$ is simply connected. 
This means that the simply connected $6$-dimensional symplectic manifold $M'$ satisfies that
$\dim \ker (L_{\omega'} : H^2(M')\rightarrow H^4(M'))$ is odd, as required.

It remains to find $S\hookrightarrow M$ as required. The cohomology class $a$ is non-zero, so there
is some $b\in H^4(M,\ZZ)$ such that $a\cup b\neq 0$. It is easy to see that there is a
rank $2$ complex vector bundle $E\to M$ with $c_1(E)=0$, $c_2(E)=2b$. This corresponds
to the fact that the map $[M,B\SU(2)]\to H^4(M,\ZZ)$ given by the second Chern class exhausts
$2\,  H^4(M,\ZZ)$. A short proof runs as follows: $B\SU(2)$ has trivial $3$-skeleton and 
it has $\pi_4(B\SU(2))=\ZZ$ and $\pi_5(B\SU(2))=\ZZ_2$. Represent
the cohomology class $b$ by a cocycle $\varphi_b:C_4(M)\to \ZZ$, where $C_4(M)$ is the space of 
cellular chains. Given $b$, we define $f:M\to B\SU(2)$ inductively on the skeleta (in what follows we denote by $X[k]$ the $k$-skeleton of a  space $X$).  It is
trivial on the $3$-skeleton of $M$. For every $4$-cell $c$, we define $f:c\to B\SU(2)[4]=S^4$
to have degree $\varphi_b(c)\in \ZZ$. As $M$ is simply connected there are no $5$-cells, so
it only remains to attach the $6$-cell $c_6$ to the $4$-skeleton $M[4]$. The 
attaching map is given by some $g:S^5\to M[4]$. When composed with $f$, we have
a map $f\circ g: S^5 \to  B\SU(2)$, which gives an {\it obstruction} element $o_f\in \pi_5(B\SU(2))=\ZZ_2$.
If we multiply $b$ by two, then the map $\varphi_b$ gets multiplied by $2$. The
corresponding $f$ is given by composing $f$ with a double cover of $S^4$, hence
the obstruction element is $2o_f=0$. This means that the map $f$ associated to $2b$ 
can be extended to $M\to B\SU(2)$.

Now take the rank $2$ bundle $E \to M$ just constructed. Assume that $[\omega]$ is a
an integral cohomology class (which can always be done by perturbing $\omega$ slightly
to make it rational and multiplying it by a large integer). Let $L\to M$ be the
line bundle with first Chern class $c_1(L)=[\omega]$. We now use
the asymptotically holomorphic techniques introduced by Donaldson in \cite{Donaldson}.
Specifically, the result of \cite{Auroux} guarantees the existence of a suitable
large $k\gg 0$ and a section of $E\ox L^{\ox k}$ whose zero locus is a symplectic
manifold (an asymptotically holomorphic manifold in fact). This zero locus $S\subset M$
is a symplectic surface, and the cohomology class defined by $S$ is $c_2(E\otimes L^{\ox k})=
c_2(E)+2k c_1(L)=2b+2k[\omega]$.
Therefore $\la a,[S]\ra =\la a, 2b+2k[\omega] \ra=2\la a,b\ra \neq 0$, as required.
\end{proof}

We will call the manifold produced in Proposition \ref{prop:GC} the {\it Gompf-Cavalcanti manifold},
because it is constructed by the surgery technique of Gompf \cite{Gompf} together with the
symplectic blow-up of Cavalcanti \cite{C}. Note however that this is not a unique one but a 
family of manifolds.

\section{Simply-connected K-contact non-Sasakian manifolds in dimension $7$}\label{sec:3}

We show the existence of simply connected compact K-contact non-Sasakian manifolds in 
dimension $7$ by proving that the Boothby-Wang 
fibration over the Gompf-Cavalcanti manifold is K-contact but non-Sasakian. 
The existence of a K-contact structure on such fibration is 
shown in \cite{BFMT} and \cite{HT}. For the convenience of the reader we briefly recall these constructions.

Let $(B\, ,\omega)$ be a symplectic manifold such that the
cohomology class $[\omega]$ is integral. Consider the principal $S^1$-bundle
$\pi : M\to  B$ given by the cohomology class $[\omega]\,\in\, H^2(B,\,\mathbb{Z})$.
Fibrations of this kind were first considered  by Boothby and Wang  and are
called {\em Boothby-Wang fibrations}. By \cite{W}, the total space $M$
carries an $S^1$-invariant contact form $\eta$ such that $\eta$
is a connection form whose curvature is $d\eta=\pi^*\omega$.  We
have the following result.

\begin{theorem}\label{thm:Boothby-Wang}
Any Boothby-Wang fibration admits a K-contact structure on the total space.
\end{theorem}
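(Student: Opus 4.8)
The plan is to produce the K-contact structure on the Boothby-Wang total space $M$ explicitly from the data on the symplectic base $(B,\omega)$, by choosing a compatible almost complex structure downstairs and lifting it to an endomorphism of $TM$ that satisfies all four axioms in the definition of K-contact. The natural splitting to exploit is $TM \cong \pi^*(TB) \oplus \RR\xi$, where $\xi$ is the Reeb field (equivalently, the fundamental vector field generating the $S^1$-action) and $\pi^* TB$ is identified with the horizontal distribution $\SD = \ker\eta$ determined by the connection form $\eta$.

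First I would fix an almost complex structure $J$ on $B$ compatible with $\omega$, so that $g_B(X,Y) = \omega(X,JY)$ is a Riemannian metric. Such $J$ exists because the space of $\omega$-compatible almost complex structures is nonempty (and contractible). I would then define $\Phi$ on $\SD$ to be the horizontal lift of $J$, and set $\Phi(\xi)=0$. By construction $\Phi^2 = -\Id$ on $\SD$ and $\Phi^2(\xi)=0$, which is exactly $\Phi^2 = -\Id + \xi\otimes\eta$ since $\eta(\xi)=1$ and $\eta$ vanishes on $\SD$. Because $d\eta = \pi^*\omega$ restricts on $\SD$ to the pullback of $\omega$, the compatibility conditions $d\eta(\Phi X,\Phi Y) = d\eta(X,Y)$ and positivity $d\eta(\Phi X, X)>0$ for nonzero $X\in\SD$ transfer directly from the corresponding properties of $(\omega, J)$ on $B$. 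The associated metric $g = d\eta(\Phi\cdot,\cdot) + \eta\otimes\eta$ is then the pullback of $g_B$ on horizontal vectors, with $\xi$ a unit vector orthogonal to $\SD$.

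The step I expect to be the main obstacle is verifying that the Reeb field $\xi$ is Killing for this metric $g$, since the first three axioms are essentially formal once the lift is set up. The key facts are that the $S^1$-action is generated by $\xi$, that $\eta$ is $S^1$-invariant (being a connection form for the principal bundle), and that $\omega$, $J$, and hence $g_B$ live on the base and are automatically invariant under the structure-group action. Concretely, $\SD$ is preserved by the flow of $\xi$ (as $\ker\eta$ with $\eta$ invariant), the flow acts on $\SD$ by the differential of $\pi$, which is an isometry onto $(TB, g_B)$, and $\eta(\xi)=1$ is constant along the flow; from $\mathcal{L}_\xi g = 0$ on both summands $\SD$ and $\RR\xi$ it follows that $\xi$ is Killing. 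I would package these verifications so that each of the four bullet points in the K-contact definition is checked in turn, pointing to the $S^1$-invariance of the connection and the naturality of the horizontal lift as the mechanism that makes everything go through. For the detailed construction and the invariance argument I would cite the references \cite{BFMT} and \cite{HT} indicated above, where this K-contact structure is built in precisely this way.
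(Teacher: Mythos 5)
Your construction is correct, but it takes a genuinely different route from the paper. You build the K-contact structure explicitly: split $TM=\SD\oplus\RR\xi$ via the connection, lift an $\omega$-compatible almost complex structure $J$ from $B$ to $\Phi$ on $\SD$ with $\Phi(\xi)=0$, and check the four axioms directly, with the Killing condition for $\xi$ coming from $S^1$-invariance of $\eta$, $d\eta=\pi^*\omega$ and the lifted $J$. This is essentially the classical Boothby--Wang/Hatakeyama argument, and it is the one carried out in the references \cite{BFMT} and \cite{HT} that you cite; the only point to watch is the sign convention, since the paper's axiom $d\eta(\Phi X,X)>0$ forces $\omega(J\cdot,\cdot)$ (rather than $\omega(\cdot,J\cdot)$) to be the positive-definite form, so one may have to replace $J$ by $-J$. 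The paper instead invokes Lerman's criterion (Proposition \ref{prop:L-K}): it introduces the annihilator line bundle $\SD^0$, singles out a component $\SD^0_+$, defines the universal contact moment map $\psi:\SD^0_+\to\mathfrak{g}^*$, and observes that for the free $S^1$-action generated by the Reeb field the function $\langle\psi,X\rangle$ is nowhere zero (the Reeb field being transverse to $\SD$), hence can be taken positive, which by Lerman's theorem yields a K-contact metric without ever writing one down. Your approach buys an explicit, self-contained structure with no black box; the paper's buys brevity modulo the cited criterion and, more to the point, sets up the moment-map formalism that is reused verbatim in the proof of Proposition \ref{prop:quasi-regular} on quasi-regularity, which is why the authors prefer it.
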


\begin{proof}
To prove this theorem we need to introduce a certain tool, called the universal contact moment map in the sense of Lerman \cite{L}. 
Recall that by our assumption the given contact distribution $\SD$ is determined by the contact form $\eta$, that is $\SD=\ker\eta$. 
Consider its annihilator $\SD^0\subset T^*M$. Clearly, $\SD^0$ is a line bundle, and, therefore, it has two components after the removal of the zero section,
 $$
 \SD^0\setminus M=\SD^0_+\sqcup \SD^0_{-}.
 $$
Single out one of these components, say $\SD^0_{+}$. Consider the Lie algebra of contact vector fields $\chi(M,\eta)$ on $M$. 
It is known that this Lie algebra can be identified with a space of sections of the vector bundle $TM/\SD$, 
that is $\chi(M,\eta)\cong\Gamma(M,TM/\SD)$. Because of that there is a 
natural pairing between points of the line bundle $\SD^0$ and contact vector fields given by the formula
 $$
 \SD^0\times\chi(M,\eta)\rightarrow \mathbb{R},\quad ((p,\beta),X)\mapsto \langle\beta,X_p\rangle
 $$
where $\beta\in\SD^0,X_p\in T_pM,  p\in M$. Suppose that a Lie algebra $\mathfrak{g}$ acts on $M$ by contact vector fields, that is, 
there exists a representation $\rho:\mathfrak{g}\rightarrow\chi(M,\eta)$. Define the {\it universal moment map} as the map
 $$
 \psi:\SD_{+}^0\rightarrow \mathfrak{g}^*
$$
by the formula
 $$
 \langle\psi (p,\beta),X\rangle=\langle(p,\beta),\rho(X)\rangle=\langle\beta,\rho(X)_p\rangle,
 $$
where $(p,\beta)\in(\SD_{+}^0)_p\subset T^*_pM,X\in\mathfrak{g}$. Now the proof becomes a consequence  of the following criterion proved by Lerman in \cite{L}.

\begin{proposition}\label{prop:L-K}
A compact co-orientable contact manifold $(M,\eta)$ admits a K-contact metric $g$ if and only if there exists an action of a torus $T$ on $M$ 
preserving the contact structure $\SD$ and a vector $X\in\mathfrak{t}=L(T)$ so that the function
$\langle\psi,X\rangle:\SD^0_{+}\rightarrow \mathbb{R}$
is strictly positive. \hfill $\Box$
\end{proposition}

We continue with the proof of Theorem \ref{thm:Boothby-Wang}.
Consider the $S^1$-action on $M$ given by the Reeb vector field. Let $\mathfrak{g}=L(S^1)$, and $\rho:\mathfrak{g}\rightarrow \chi(M,\eta)$ 
be the homomorphism of Lie algebras determined by this action (thus, $\mathfrak{g}=\mathfrak{t}=L(S^1)$ in this particular situation). 
Since the $S^1$-action is free, $\rho(X)_p\not=0$ for any $p\in M$. Now,
 $$
 \langle\psi,X\rangle(p,\beta)=\langle\psi(p,\beta),X\rangle=\langle\beta,\rho(X)_p\rangle.
 $$
Note that in the considered case $\beta\in (\SD^0_{+})_p\subset T_p^*M$, and, therefore, $\beta\not=0$. Also $(p,\beta)$ belongs to 
the annihilator of the distribution $\SD$, while $\rho(X)$ is transversal to $\SD$, since it is given by the Reeb vector field. Thus, for any point $p$, 
$\langle (p,\beta),\rho(X)_p\rangle\not=0$. Hence, $X$ may be chosen to yield positive sign everywhere, and we complete the proof by 
applying Proposition \ref{prop:L-K}.
\end{proof} 

The following gives a proof of Theorem \ref{thm:main}.

\begin{theorem} 
The total space of the Boothby-Wang fibration over the Gompf-Cavalcanti manifold is 
a simply connected K-contact non-Sasakian manifold of dimension $7$.
\end{theorem}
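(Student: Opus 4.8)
The plan is to dispose of the dimension, simple connectivity, and K-contact requirements quickly, and then to spend the real effort ruling out a Sasakian structure by a Betti-number parity argument. Write $\pi\colon M\to B$ for the Boothby-Wang fibration, where $B$ denotes the $6$-dimensional Gompf-Cavalcanti manifold and $M$ its total space. Then $\dim M=6+1=7$, and $M$ carries a K-contact structure by Theorem \ref{thm:Boothby-Wang}. For simple connectivity I would use the homotopy exact sequence of the circle bundle $S^1\to M\to B$. Since $B$ is simply connected, $\pi_1(M)$ is the cokernel of the connecting map $\partial\colon\pi_2(B)\to\pi_1(S^1)=\ZZ$, which under the Hurewicz isomorphism $\pi_2(B)\cong H_2(B)$ is the pairing with the Euler class $[\omega]$. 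Because $B$ is simply connected, $H^2(B;\ZZ)$ is torsion-free, so $[\omega]$ has a well-defined divisibility $d$; replacing $\omega$ by $\omega/d$ (still symplectic, and integral since $[\omega]=d\cdot e_0$ with $e_0$ primitive) makes the Euler class primitive and $\partial$ surjective, giving $\pi_1(M)=0$. This rescaling does not affect $\ker L_\omega$, since $L_\omega$ is linear in $[\omega]$.

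The heart of the argument is to show that $M$ admits no Sasakian structure, and for this I would compute $b_3(M)$ and compare its parity with the classical obstruction (1): a compact Sasakian manifold of dimension $2n+1$ has even Betti number $b_p$ for every odd $p$ with $1\le p\le n$. Here $n=3$, so a Sasakian structure on $M$ would force $b_3(M)$ to be even. I would read off $b_3(M)$ from the Gysin sequence of $\pi$, which yields the short exact sequence
\begin{equation*}
0\too \coker\big(\cup[\omega]\colon H^1(B)\to H^3(B)\big)\too H^3(M)\too \ker\big(L_\omega\colon H^2(B)\to H^4(B)\big)\too 0.
\end{equation*}
As $B$ is simply connected, $H^1(B)=0$, so the left-hand term is all of $H^3(B)$ and
\begin{equation*}
b_3(M)=b_3(B)+\dim\ker\big(L_\omega\colon H^2(B)\to H^4(B)\big).
\end{equation*}

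Now the two summands have opposite parity. On one hand $\dim\ker L_\omega$ is odd, which is precisely the defining property of the Gompf-Cavalcanti manifold (Proposition \ref{prop:GC}). On the other hand $b_3(B)$ is even: $B$ is a closed oriented $6$-manifold, and Poincar\'e duality equips $H^3(B;\RR)$ with a nondegenerate bilinear form via the cup product into $H^6(B;\RR)\cong\RR$, which is skew-symmetric since $3$ is odd, and a nondegenerate alternating form exists only on an even-dimensional space. Hence $b_3(M)$ is odd, contradicting the evenness required of a Sasakian $7$-manifold, so $M$ carries no Sasakian structure. The main obstacle here is conceptual rather than computational: one must recognize that the odd-dimensional kernel of the Lefschetz map on the base---the single feature the Gompf-Cavalcanti construction was engineered to produce---contributes an odd-dimensional summand to $H^3$ of the total space, where it collides with the automatic evenness of $b_3(B)$ and with the Sasakian parity constraint. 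The only point needing a little care is arranging primitivity of $[\omega]$ for simple connectivity while preserving the parity of $\dim\ker L_\omega$, which the rescaling $\omega\mapsto\omega/d$ settles.
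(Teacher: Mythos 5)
Your proposal is correct and follows essentially the same route as the paper: K-contact via the Boothby--Wang construction, simple connectivity from primitivity of $[\omega]$, and non-Sasakianness from the Gysin-sequence computation $b_3(M)=b_3(B)+\dim\ker L_\omega$, with $b_3(B)$ even by Poincar\'e duality and $\dim\ker L_\omega$ odd by construction, contradicting the evenness of $b_3$ for Sasakian $7$-manifolds. The only cosmetic difference is that you derive $\pi_1(M)=0$ from the homotopy exact sequence and Hurewicz, whereas the paper combines the Serre spectral sequence for $H_1$ with the abelianness of $\pi_1(M)$; both are valid.
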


\begin{proof}
Let $(M,\omega)$ be a Gompf-Cavalcanti manifold as given by Proposition \ref{prop:GC}.
We can assume that $[\omega]$ is an integral cohomology class. Let
 \begin{equation}\label{eqn:M}
  S^1\rightarrow E\rightarrow M
 \end{equation}
be the associated Boothby-Wang fibration. By Theorem \ref{thm:Boothby-Wang}, $E$ has
a  K-contact structure. Now we need to prove that $E$ cannot carry Sasakian structures.

There is an exact sequence 
 $$
 H_2(M)\to H_1(S^1)={\mathbb Z}\to H_1(E) \to 0
 $$ 
from the Serre spectral sequence. The map $H_2(M)\to {\mathbb Z}$ is cupping with $[\omega]\in H^2(M)$. Taking $[\omega]$ integral
cohomology class and primitive, we have that $H_2(M)\to {\mathbb Z}$ is surjective and
hence $H_1(E)=0$. The long homotopy 
exact sequence gives $\pi_1(S^1)={\mathbb Z} \to \pi_1(E) \to \pi_1(M)=0$, hence $\pi_1(E)$ is abelian.
Therefore $E$ is simply connected.

The Gysin exact sequence associated to (\ref{eqn:M}) is
 $$
   H^1(M)=0 \stackrel{\wedge \omega}{\longrightarrow} H^3(M) \longrightarrow  H^3(E) \longrightarrow  H^2(M) \stackrel{\wedge \omega}{\longrightarrow} H^4(M).
 $$
Thus 
 $$
 b^3(E)=b^3(M)+ \dim (\ker L_\omega :H^2(M)  \to H^4(M)).
 $$

As $M$ is a $6$-manifold, we have that $b^3(M)$ is even (by Poincar\'e duality, the intersection
pairing on $H^3(M)$ is an antisymmetric non-degenerate bilinear form, hence the dimension of
$H^3(M)$ is even). By construction,
$\dim (\ker L_\omega :H^2(M) \to H^4(M))$ is odd, so $b^3(E)$ is odd.
As the third Betti number of a $7$-dimensional Sasakian manifold has to be even \cite{Betti}, 
we have that $E$ cannot admit a Sasakian structure.
\end{proof}

\section{Regularity and quasi-regularity}

A Sasakian or a K-contact structure on a compact manifold 
$M$ is called \textit{quasi-regular} if there is a positive
integer $\delta$ satisfying the condition
that each point of $M$ has a foliated coordinate chart
$(U\, ,t)$ with respect to $\xi$ (the coordinate $t$ is in the direction of $\xi$)
such that each leaf for $\xi$ passes through $U$ at most $\delta$ times. If
$\delta\,=\, 1$, then the Sasakian or K-contact structure is called \textit{regular}. (See
\cite[p. 188]{BG}.)

If $N$ is a K\"ahler manifold whose K\"ahler form $\omega$ 
defines an integral cohomology class, then the total space of the circle bundle 
$S^1 \hookrightarrow M \stackrel{\pi}{\longrightarrow} N$ with Euler class $[\omega]\in 
H^2(M,\mathbb{Z})$ is a regular Sasakian manifold with contact form $\eta$ such that $d 
\eta \,=\, \pi^*(\omega)$. The converse also holds: if $M$ is a regular Sasakian structure 
then the space of leaves $N$ is a K\"ahler manifold, and we have a circle
bundle $S^1\to M \to N$ as above. 
If $M$ has a quasi-regular Sasakian structure, then the space of leaves $N$ is a K\"ahler
orbifold with cyclic quotient singularities, and there is an orbifold circle bundle
$S^1 \to M\to N$ such that the contact form $\eta$ satisfies $d\eta=\pi^*(\omega)$,
where $\omega$ is the orbifold K\"ahler form.

Similar properties hold in the K-contact case, substituting K\"ahler by symplectic (actually almost K\"ahler).
If $M$ has a regular K-contact structure, then it is the total space of a circle bundle
$S^1 \hookrightarrow M \stackrel{\pi}{\longrightarrow} N$, where $(N,\omega)$ is a symplectic
manifold, with Euler class $[\omega]\in 
H^2(M,\mathbb{Z})$ and $d \eta \,=\, \pi^*(\omega)$. If $M$ has a quasi-regular K-contact structure,
then it is the total space of an orbifold circle bundle $S^1 \to M\to N$ over a symplectic
orbifold $N$ with cyclic quotient singularities and Euler class $[\omega]\in 
H^2(M,\mathbb{Z})$, where $\omega$ is the orbifold symplectic form.

A result of \cite{OV} says that if $M$ admits a Sasakian structure,
then it admits also a quasi-regular Sasakian structure. This also extends to the
case of K-contact structures.

\begin{proposition} \label{prop:quasi-regular}
If a compact manifold $M$ admits a K-contact structure, it admits a quasi-regular contact structure.
\end{proposition}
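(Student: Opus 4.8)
The plan is to combine the Lerman criterion (Proposition \ref{prop:L-K}) with an approximation of the Reeb field, in the spirit of Rukimbira's theorem that underlies \cite{OV}; in fact the construction produces a quasi-regular \emph{K-contact} structure, which is the statement intended here. Let $(\eta,\xi,\Phi,g)$ be the given K-contact structure. Since $\xi$ is Killing and $M$ is compact, the Reeb flow lies in the compact Lie group $\mathrm{Isom}(M,g)$. First I would let $T$ be the closure of the one-parameter subgroup generated by $\xi$; as a closed connected abelian subgroup of a compact Lie group, $T$ is a torus, with Lie algebra $\mathfrak{t}\ni\xi$. Each time-$t$ map of the Reeb flow preserves $\eta$ (because $\mathcal{L}_\xi\eta=0$), so by continuity the whole torus $T$ preserves $\eta$, and in particular preserves the contact distribution $\SD=\ker\eta$.

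Next I would perturb $\xi$ inside $\mathfrak{t}$. The elements of $\mathfrak{t}$ generating closed (circle) subgroups are the rational points of the lattice $\ker(\exp\colon\mathfrak{t}\to T)$, and these are dense in $\mathfrak{t}$. Thus I can choose $\xi'\in\mathfrak{t}$ arbitrarily close to $\xi$ whose flow is periodic, i.e.\ generates an $S^1$-action with closed orbits and finite stabilizers. Since $\eta(\xi)=1$ and $M$ is compact, for $\xi'$ close enough to $\xi$ the function $h:=\eta(\xi')$ is strictly positive everywhere, and I set $\eta'=h^{-1}\eta$. Because $h>0$, one checks $\eta'\wedge(d\eta')^n=h^{-(n+1)}\,\eta\wedge(d\eta)^n>0$, so $\eta'$ is again a contact form with the same distribution $\SD$ and co-orientation.

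I then claim $\xi'$ is the Reeb field of $\eta'$. Indeed $\xi'\in\mathfrak{t}$ preserves $\eta$, so $\mathcal{L}_{\xi'}\eta=0$, giving $i_{\xi'}d\eta=-dh$; substituting into $d\eta'=-h^{-2}\,dh\wedge\eta+h^{-1}d\eta$ and using $i_{\xi'}dh=-i_{\xi'}i_{\xi'}d\eta=0$ yields $i_{\xi'}d\eta'=0$, while $\eta'(\xi')=1$ by construction. Finally I would apply Proposition \ref{prop:L-K} to $(M,\eta')$ with the torus $T$, which preserves $\SD=\ker\eta'$, and the vector $X=\xi'\in\mathfrak{t}$: for $(p,\beta)\in\SD^0_{+}$ one has $\beta=c\,\eta'_p$ with $c>0$, so $\langle\psi,\xi'\rangle(p,\beta)=c\,\eta'_p(\xi'_p)=c>0$. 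Hence $\langle\psi,\xi'\rangle$ is strictly positive on $\SD^0_{+}$, and Lerman's criterion provides a K-contact metric for $\eta'$, whose Reeb field is forced to be $\xi'$. Since $\xi'$ generates a circle action, each leaf is a circle meeting every foliated chart finitely often, so the resulting K-contact structure is quasi-regular.

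The step I expect to be the main obstacle is the simultaneous control in the approximation: $\xi'$ must sit in the rational lattice of $\mathfrak{t}$ (to close up the orbits and yield quasi-regularity) while staying close enough to $\xi$ to keep $\eta(\xi')>0$ uniformly on $M$. Density of the lattice directions secures the first requirement and compactness of $M$ the second, but it is their interplay—rather than any single estimate—that drives the argument. Once $\xi'$ is fixed, verifying that it is the Reeb field of $\eta'$ and invoking Lerman's criterion are routine.
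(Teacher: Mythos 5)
Your proof is correct and follows essentially the same strategy as the paper's: obtain a torus of contact transformations containing the Reeb direction, perturb to a nearby circle-generating vector $\xi'$ in its Lie algebra (using density of such directions and compactness to preserve positivity), rescale the contact form so that $\xi'$ becomes the Reeb field, and then produce the compatible K-contact metric via Lerman's criterion. The only cosmetic differences are that you build the torus as the closure of the Reeb flow in the isometry group rather than extracting it from Proposition \ref{prop:L-K}, and you rescale the already $T$-invariant form $\eta$ directly instead of averaging an arbitrary defining $1$-form over the circle.
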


\begin{proof}
Assume that there is a K-contact structure on $M$. By Proposition \ref{prop:L-K}, there exists a torus action 
$T\times M\rightarrow M$ preserving the contact distribution and a vector 
$X\in\mathfrak{t}$ such that $\langle\psi,X\rangle>0$. Choose a vector $Y\in\mathfrak{t}$ with the 
property that it is tangent to an embedding $T'=S^1\hookrightarrow T$. Clearly, 
the corresponding fundamental vector field $Y_M$ has the property that the leaves 
of the corresponding foliations are compact. The set of such $Y$ is dense in $\mathfrak{t}$. 
Therefore, for vectors $Y$ which are sufficiently close to $X$, the condition $\langle\psi, Y\rangle>0$ 
is still satisfied. 

So it remains to see that there is K-contact structure whose Reeb vector field is $Y_M$, since this
will be quasi-regular because the leaves of the characteristic foliation are all compact.
We follow the notations of the proof of Theorem \ref{thm:Boothby-Wang}.
The action of the circle $T'$ on $M$ preserves $\SD$, hence
the lifted action of $T'$ on $T^*M$ preserves $\SD^0$. Since $T'$ is connected, 
the lifted action preserves the connected component $\SD^0_+$ as well. It follows that for any $1$-form $\beta$ on $M$ 
with $\ker \beta=\SD$, the average $\bar\beta$ of $\beta$ over $T'$ still satisfies $\ker\,\bar\beta=\SD$. 
So $\bar\beta \in \SD^0$. Now use the formula (derived in \cite{L}, formulae (3.4) and (3.5)),
 $$
 i_{Y_M}\bar\beta=\langle\psi\circ\bar\beta,Y\rangle >0.
 $$
Now let
 $$
 \eta=(\langle\psi\circ\bar\beta,Y\rangle)^{-1}\bar\beta,
 $$
which satisfies  $i_{Y_M}\eta=1$. Hence $\eta$ defines the contact structure and $Y_M$ is its Killing vector field.
Then $TM=\SD\oplus\la Y_M\ra $, and the splitting is $T'$-invariant. We use the splitting to define the desired 
Riemannian metric $g$. Declare $\SD$ and $\la Y_M\ra$ to be orthogonal and define $g(Y_M,Y_M)=1$, 
thus $Y_M$ becomes a unit normal to $\SD$. On $\SD$ we choose a $T'$-invariant complex structure compatible with 
$d\eta|_{\SD}$ and define $g|_{\SD}(\cdot,\cdot)=d\eta|_{\SD}(\cdot,\Phi\cdot)$. 
Then $g$ is $T'$-invariant and hence $L_{Y_M}g=0$. Thus we have obtained a K-contact structure on $M$. 
\end{proof}

\section{Minimal models and formality} \label{sec:3.5}

Now we want to analyse the rational homotopy type of K-contact and 
Sasakian simply connected $7$-manifolds, in particular the property of formality.
Simply connected compact manifolds of dimension $\leq 6$ are always formal \cite{MathZ},
so dimension $7$ is the first instance in which formality is an issue. 

We start by reviewing concepts about minimal models and formality from \cite{MathZ,  FHT, GM}.
A {\it differential graded algebra} (or DGA) over the real numbers $\RR$, is a pair $(A,d)$ consisting
of a graded commutative algebra $A=\oplus_{k\geq 0} A^k$ over $\RR$, and
a differential $d$ satisfying the Leibnitz rule 
$d(a\cdot b) = (da)\cdot b +(-1)^{|a|} a\cdot (db)$, where
$|a|$ is the degree of $a$. 
Given a differential graded commutative algebra $({A},\,d)$, we 
denote its cohomology by $H^*({A})$. The cohomology of a 
differential graded algebra $H^*({A})$ is naturally a DGA with the 
product inherited from that on ${A}$ and with the differential
being identically zero. The DGA $({A},\,d)$ is {\it connected} if 
$H^0({A})\,=\,\RR$, and ${A}$ is {\em $1$-connected\/} if, in 
addition, $H^1({A})\,=\,0$. Henceforth we shall assume that all our DGAs are connected.
In our context, the main example of DGA is the de Rham complex $(\Omega^*(M),\,d)$
of a connected differentiable manifold $M$, where $d$ is the exterior differential.

Morphisms between DGAs are required to preserve the degree and to commute 
with the differential. A morphism $f:(A,d)\to (B,d)$ is a quasi-isomorphism if
the map induced in cohomology $f^*:H^*(A,d)\to H^*(B,d)$ is an isomorphism.
Quasi-isomorphism produces an equivalence relation in the category of DGAs.

A DGA $(\mathcal{M},\,d)$ is {\it minimal\/} if
\begin{enumerate}
 \item $\mathcal{M}$ is free as an algebra, that is, $\mathcal{M}$ is the free
 algebra $\bigwedge V$ over a graded vector space $V\,=\,\bigoplus_i V^i$, and
 \item there is a collection of generators $\{x_\tau\}_{\tau\in I}$
indexed by some well ordered set $I$, such that
 $|x_\mu|\,\leq\, |x_\tau|$ if $\mu \,< \,\tau$ and each $d
 x_\tau$ is expressed in terms of preceding $x_\mu$, $\mu\,<\,\tau$.
 \end{enumerate}
We say that $(\bigwedge V,\,d)$ is a {\it minimal model} of the
differential graded commutative algebra $({A},\,d)$ if $(\bigwedge V,\,d)$ is minimal and there
exists a quasi-isomorphism $\rho\colon
{(\bigwedge V,\,d)}\longrightarrow {({A},\,d)}$.
A connected DGA $({A},\,d)$ has a minimal model unique up to isomorphism. For $1$-connected
DGAs, this is proved in~\cite{DGMS}. In this case, the minimal model satisfies that
$V^1=0$ and the condition (2) above is equivalent to  $dx_\tau$ not having a linear part.

A {\it minimal model\/} of a connected differentiable manifold $M$
is a minimal model $(\bigwedge V,\,d)$ for the de Rham complex
$(\Omega^*(M),\,d)$ of differential forms on $M$. If $M$ is a simply
connected manifold, then the dual of the real homotopy vector
space $\pi_i(M)\otimes \RR$ is isomorphic to $V^i$ for any $i$ (see~\cite{DGMS}).

A {\it model\/} of a DGA $(A,d)$ is any DGA $(B,d)$ with the same minimal 
model (that is, they are equivalent with respect to the equivalence relation
determined by the quasi-isomorphisms).

A minimal algebra $(\bigwedge V,\,d)$ is called {\it formal} if there exists a
morphism of differential algebras $\psi\colon {(\bigwedge V,\,d)}\,\longrightarrow\,
(H^*(\bigwedge V),0)$ inducing the identity map on cohomology.
Also a differentiable manifold $M$ is called formal if its minimal model is
formal. The formality of a minimal algebra is characterized as follows.

\begin{proposition}[\cite{DGMS}]\label{prop:criterio1}
A minimal algebra $(\bigwedge V,\,d)$ is formal if and only if the space $V$
can be decomposed into a direct sum $V\,=\, C\oplus N$ with $d(C) \,=\, 0$
and $d$ injective on $N$, such that every closed element in the ideal
$I(N)$ in $\bigwedge V$ generated by $N$ is exact.
\end{proposition}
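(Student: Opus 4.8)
The plan is to prove both implications, writing $H:=H^*(\bigwedge V)$ for the cohomology and observing first that the decomposition data is almost forced: if $V=C\oplus N$ with $d(C)=0$ and $d$ injective on $N$, then $C\subseteq\ker d$ and $N\cap\ker d=0$, so necessarily $C=\ker d\cap V$ and $N$ is merely a choice of graded complement. Thus the real content of the statement is the existence of a complement $N$ for which every cocycle in $I(N)$ is exact. Throughout I would use the vector-space splitting $\bigwedge V=\bigwedge C\oplus I(N)$, where $\bigwedge C$ is the subalgebra generated by $C$ (consisting entirely of cocycles, since $d(C)=0$) and $I(N)$ is the ideal generated by $N$.

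For the implication ($\Leftarrow$) I would produce the formality morphism explicitly. Let $p:\bigwedge V\to\bigwedge C$ be the projection along $I(N)$; since $I(N)$ is an ideal, $p$ is an algebra morphism, and composing with the class map $q:\bigwedge C\to H$, $q(\alpha)=[\alpha]$ (well defined as an algebra map because $\bigwedge C\subseteq\ker d$), I set $\psi=q\circ p$. To check that $\psi$ is a chain map into $(H,0)$ it suffices to verify $\psi(dx)=0$ on generators: writing $dx=p(dx)+r$ with $r\in I(N)$, the element $r=dx-p(dx)$ is a cocycle in $I(N)$, hence exact by hypothesis, so $[p(dx)]=[dx]=0$. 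The same computation applied to an arbitrary cocycle $z=p(z)+r$ gives $\psi(z)=[p(z)]=[z]$, so $\psi$ induces the identity on $H$. This is the easy half and should go through without trouble.

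For ($\Rightarrow$), I would start from a formality morphism $\psi:(\bigwedge V,d)\to(H,0)$. The basic observation is that $\psi(z)=[z]$ for every cocycle $z$ (this is what ``inducing the identity'' means for a map into $(H,0)$). Two consequences follow. First, $\psi$ is surjective, and the long exact sequence attached to $0\to\ker\psi\to\bigwedge V\xrightarrow{\psi}H\to0$ forces $\ker\psi$ to be acyclic, because $\psi^*$ is an isomorphism. Second, no generator of $V$ is exact (a minimal differential lands in decomposables, since in the $1$-connected case it has no linear part), whence $C\cap\ker\psi=0$. These two facts point to the target: if one can arrange $N:=\ker\psi\cap V$ to be a complement of $C$, then $I(N)\subseteq\ker\psi$ and the acyclicity of $\ker\psi$ gives at once that every cocycle in $I(N)$ is exact. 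A rank count ($\psi|_C$ being injective) shows that $N=\ker\psi\cap V$ complements $C$ precisely when $\psi(V)\subseteq\psi(C)$, that is, when $\psi$ vanishes on a complement of the closed generators.

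The main obstacle is exactly this last point: a given $\psi$ need not satisfy $\psi(V)\subseteq\psi(C)$, so it must be modified. The plan is to replace $\psi$, by induction over the ordered generating set, by a formality morphism that vanishes on every non-closed generator. For a closed generator one is forced to assign its class; for a non-closed generator $x_\tau$ the chain-map constraint only demands $\psi(dx_\tau)=[dx_\tau]=0$, which holds because $dx_\tau$ is already exact in $\bigwedge V$, so one is free to try to set $\psi(x_\tau)=0$. The delicate part --- and where the hypothesis that some formality map exists is genuinely used, rather than becoming circular --- is to show that these choices can be carried through the induction while preserving $\psi(z)=[z]$ on all cocycles; controlling the correction terms in each degree (equivalently, the homotopy class of the modification) is the technical heart of the argument. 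Once the modification is achieved, the reduction above finishes the proof.
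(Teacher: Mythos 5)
The paper itself offers no proof of this proposition --- it is quoted from \cite{DGMS} --- so there is no in-paper argument to compare against; I can only judge your attempt on its own terms. Your implication ($\Leftarrow$) is complete and correct: the vector-space splitting $\bigwedge V=\bigwedge C\oplus I(N)$, the fact that the projection $p$ onto $\bigwedge C$ is an algebra map because $I(N)$ is an ideal, and the verification that $\psi=q\circ p$ is a chain map inducing the identity (via exactness of the closed elements $dx-p(dx)$ and $z-p(z)$ of $I(N)$) is exactly the standard argument, and checking the chain-map condition on generators suffices since $d$ is a derivation and $\psi$ multiplicative.

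The implication ($\Rightarrow$), however, has a genuine gap, and you have located it yourself without filling it: the whole argument reduces to producing a formality morphism $\psi$ satisfying $\psi(V)\subseteq\psi(C)$, i.e.\ one vanishing on some complement of $C=\ker d\cap V$, and you do not prove that such a $\psi$ exists. This is not a routine normalization. An arbitrary formality morphism is constrained on generators only by $\psi(dx_\tau)=0$; if you redefine $\psi(x_\tau)=0$ on a non-closed generator, the condition $\psi(dx_\mu)=0$ for later generators $x_\mu$ whose differentials involve $x_\tau$ is no longer automatic, and killing the resulting correction terms degree by degree is precisely the obstruction-theoretic content of the theorem. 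In \cite{DGMS} (and in Halperin--Stasheff's treatment via the bigraded model of $(H,0)$, whose structure quasi-isomorphism vanishes by construction on all generators of positive lower degree) this is where the real work happens. Your reduction of the hard direction to this normalization is correct, and the surrounding steps --- acyclicity of $\ker\psi$ from the long exact sequence, injectivity of $\psi|_C$ from minimality, $I(N)\subseteq\ker\psi$ once $N=\ker\psi\cap V$ complements $C$ --- are all sound, but as written the forward implication is incomplete.
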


This characterization of formality can be weakened using the concept of
$s$-formality introduced in \cite{MathZ}.

\begin{definition}\label{def:primera}
A minimal algebra $(\bigwedge V,\,d)$ is $s$-formal ($s> 0$) if for each $i\leq s$
the space $V^i$ of generators of degree $i$ decomposes as a direct
sum $V^i=C^i\oplus N^i$, where the spaces $C^i$ and $N^i$ satisfy
the three following conditions:
\begin{enumerate}
\item $d(C^i) = 0$,
\item the differential map $d\colon N^i\longrightarrow \bigwedge V$ is
injective, and
\item any closed element in the ideal
$I_s=I(\bigoplus\limits_{i\leq s} N^i)$, generated by the space
$\bigoplus\limits_{i\leq s} N^i$ in the free algebra $\bigwedge
(\bigoplus\limits_{i\leq s} V^i)$, is exact in $\bigwedge V$.
\end{enumerate}
\end{definition}

A differentiable manifold $M$ is $s$-formal if its minimal model
is $s$-formal. Clearly, if $M$ is formal then $M$ is $s$-formal, for any $s>0$.
The main result of \cite{MathZ} shows that sometimes the weaker
condition of $s$-formality implies formality.

\begin{theorem}[\cite{MathZ}]\label{fm2:criterio2}
Let $M$ be a connected and orientable compact
manifold of dimension $2n$ or $(2n-1)$. Then $M$ is formal if and
only if it is $(n-1)$-formal.
\end{theorem}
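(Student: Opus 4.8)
The plan is to prove the non-trivial implication, that $(n-1)$-formality forces formality; the converse is immediate, since by definition a formal minimal algebra is $s$-formal for every $s>0$. Write $m=\dim M$, so $m=2n$ or $m=2n-1$, and let $(\bigwedge V,d)$ be the minimal model of $M$. First I would upgrade the given data to a global splitting $V=C\oplus N$: for $i\le n-1$ keep the decomposition $V^i=C^i\oplus N^i$ furnished by Definition \ref{def:primera}, and for $i\ge n$ set $C^i=\ker(d|_{V^i})$ and let $N^i$ be any complement, so that $d(C^i)=0$ and $d$ is injective on $N^i$. By Proposition \ref{prop:criterio1} it then suffices to show that every closed element $\xi$ in the ideal $I(N)$ is exact, and I would organise the argument by the degree $p$ of $\xi$.

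Two ranges are disposed of at once. If $p>m$, then $\xi$ represents a class in $H^p(\bigwedge V)\cong H^p(M)=0$, so $\xi$ is exact. If $p\le n-1$, then every monomial of $\xi$ has all its generators in degrees $\le n-1$ and at least one factor in $N$, hence lies in the subideal $I_{n-1}$ of Definition \ref{def:primera}, and $(n-1)$-formality gives exactness. The content is thus the range $n\le p\le m$, and the key structural observation is a degree count: since each generator has positive degree, a monomial of total degree $\le 2(n-1)+1=2n-1$ can contain \emph{at most one} generator of degree $\ge n$. Consequently, writing $V'=\bigoplus_{i\le n-1}V^i$, the subalgebra $\bigwedge V'$ is a sub-DGA (the differential of a generator of degree $\le n-1$ has no linear part, hence is a sum of products of generators each of degree $\le n-1$), and in total degree $\le 2n-1$ every element of $I(N)$ is, modulo $\bigwedge V'$, a sum $\sum_j w_j\,c_j$ with $w_j$ a single generator of degree $\ge n$ and $c_j\in\bigwedge V'$.

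Using this I would establish the technical core: \emph{in an $s$-formal minimal algebra, every closed element of $I(N)$ of degree $\le 2s+1$ is exact}. Expanding $d\xi=0$ and exploiting that each monomial carries a single high generator, one argues by induction on the degree of that generator, peeling off the high-generator terms one at a time and reducing the remainder to a closed element of $I_{n-1}$, which is exact by hypothesis. With $s=n-1$ this settles all degrees $p\le 2n-1$. In the odd-dimensional case $m=2n-1$ this already covers every degree $p\le m$, and together with $H^{>m}(M)=0$ the proof is complete.

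It remains to treat the single extra degree $p=2n$ in the even case $m=2n$, and this is where I expect the main obstacle to lie, precisely because the degree count above is now tight: a monomial of degree $2n$ in $I(N)$ may contain \emph{two} generators of degree $n$, so the one-high-generator reduction can no longer isolate them, and these middle--middle products are exactly what can obstruct exactness. Here I would bring in Poincar\'e duality in the middle dimension, namely the non-degeneracy of the intersection pairing $H^n(M)\times H^n(M)\to H^{2n}(M)\cong\RR$. Having already shown that $I(N)$ contributes nothing to cohomology in degrees $\le 2n-1$, one knows that $H^n(M)$ is carried by closed elements built from $C$; the plan is to use this, together with the freedom in the choice of the complement $N^n$, to arrange the degree-$n$ generators compatibly with the pairing, so that the products of two such generators appearing in a closed top-degree element of $I(N)$ are forced to be exact. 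Verifying that the middle cohomology is so represented and controlling these products of degree-$n$ generators is the delicate point on which the whole even-dimensional case turns.
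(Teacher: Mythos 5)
First, a remark on the comparison itself: the paper does not prove this statement --- it is quoted from \cite{MathZ} (where it is the main theorem) --- so your sketch has to be judged against the argument given there. Your scaffolding is the right one and matches \cite{MathZ}: the trivial direction, the extension of the splitting by $C^i=\ker d\cap V^i$ for $i\ge n$, the reduction to the criterion of Proposition \ref{prop:criterio1}, the immediate cases $p\le n-1$ and $p>m$, and the count showing that a monomial of degree $\le 2n-1$ carries at most one generator of degree $\ge n$. The gap is your ``technical core''. The assertion that \emph{in an $s$-formal minimal algebra every closed element of $I(N)$ of degree $\le 2s+1$ is exact} is false. Consider $\bigwedge(a,x,u)$ with $|a|=2$, $|x|=3$, $|u|=5$ and $da=0$, $dx=a^2$, $du=-a^3$. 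This is a $1$-connected minimal algebra, hence $2$-formal (here $N^2=0$, so condition (3) of Definition \ref{def:primera} is vacuous for $s=2$), and the extended splitting forces $N^3=\la x\ra$, $N^5=\la u\ra$. The element $u+xa\in I(N)$ is closed of degree $5=2s+1$ and is not exact, since the only element of degree $4$ is $a^2$, which is closed; so this algebra is not formal. Your ``peeling'' induction cannot detect this: projecting $d\xi=0$ onto the components containing a fixed high generator $w$ only yields $dc_w=0$ for the cofactor, the generators of $N^{\ge n}$ carry no exactness hypothesis at all, and closedness of $\xi$ does not pass to its individual high-generator pieces. In the example, $u$ and $xa$ conspire to be closed while neither is separately controllable.

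The lesson is that Poincar\'e duality cannot be confined to the single degree $p=2n$ of the even-dimensional case: the counterexample above is a failure precisely at degree $2n-1$ (for $n=3$), and its cohomology is not a Poincar\'e duality algebra. This is consistent with the paper's own remark that the proof in \cite{MathZ} ``only uses that the cohomology $H^*(M)$ is a Poincar\'e duality algebra'' --- duality is the essential input in the whole range $n\le p\le m$, not a patch for one residual degree, and in particular your treatment of the odd-dimensional case, which never invokes it, cannot be correct. The route that works is closer to what you sketch only in your last paragraph: pair a closed $\xi\in I(N)$ of degree $p\ge n$ with classes of complementary degree $m-p\le n$, represent those (using $(n-1)$-formality) by cocycles in $\bigwedge C$, and thereby push everything to the top degree $m$; then dispose of closed top-degree elements of $I(N)$ by combining the at-most-one-high-generator (or two-middle-generator) structure with the full strength of condition (3) of Definition \ref{def:primera}, which controls closed elements of $I_{n-1}$ in \emph{every} degree including $m$, together with $H^{>m}(M)=0$. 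Without importing duality into the degrees $n\le p\le 2n-1$, the proposal does not prove the theorem.
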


By Corollary 3.3 in \cite{MathZ} a simply connected compact manifold 
is always $2$-formal. Therefore, Theorem \ref{fm2:criterio2} implies that any
simply connected compact manifold of dimension not more than six
is formal. For simply connected $7$-dimensional compact manifolds, we have that $M$ is
formal if and only if $M$ is $3$-formal. 

Theorem \ref{fm2:criterio2} also holds for compact connected orientable orbifolds,
since the proof of \cite{MathZ} only uses that the cohomology $H^*(M)$ is a 
Poincar\'e duality algebra.

\section{Homotopy properties of simply connected Sasakian $7$-manifolds} \label{sec:4}

\begin{proposition} \label{prop:model}
Let $M$ be a simply connected compact K-contact $7$-dimensional manifold. Then a model
for $M$ is $(H\otimes \bigwedge(x),d)$, where $H$ is the cohomology algebra of a simply connected
symplectic $6$-dimensional orbifold and $dx=\omega \in H^2$ is the class of the symplectic
form.

If $M$ is Sasakian, then $H$ is the cohomology algebra of a simply connected $6$-dimensional 
K\"ahler orbifold.
\end{proposition}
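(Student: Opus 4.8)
The plan is to use the quasi-regular structure theory from Section 4 to realise $M$ as the total space of an orbifold circle bundle, and then to run the Gysin sequence at the level of DGAs. By Proposition \ref{prop:quasi-regular}, any K-contact structure on $M$ can be deformed to a quasi-regular one, so $M$ is the total space of an orbifold circle bundle $S^1\to M\to N$ over a symplectic orbifold $(N,\omega)$ with cyclic quotient singularities, Euler class $[\omega]\in H^2(N,\ZZ)$, and $d\eta=\pi^*\omega$. The base $N$ is a $6$-dimensional symplectic orbifold. Since $M$ is simply connected, a homotopy-exact-sequence argument (analogous to the one used in the proof of Theorem \ref{thm:main}) shows $N$ is simply connected as well.

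First I would set $H=H^*(N)$, the rational cohomology algebra of the orbifold $N$, which by the discussion after Theorem \ref{fm2:criterio2} is a Poincar\'e duality algebra since $N$ is a compact orientable orbifold. The key step is to produce the quasi-isomorphism exhibiting $(H\otimes\bigwedge(x),d)$, with $|x|=1$ and $dx=\omega\in H^2$, as a model for $M$. I would build this by combining two facts: that $N$ is formal, and that the circle-bundle structure is encoded by the Gysin sequence. Because $N$ is a simply connected compact orbifold of dimension $6\leq 6$, Theorem \ref{fm2:criterio2} applies (it holds for orbifolds, as noted in the excerpt), so $N$ is formal, and hence $(H,0)$ is a model for the de Rham complex of $N$. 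The pullback $\pi^*$ together with a connection form $\eta$ realising $x$ then gives a morphism $(H\otimes\bigwedge(x),d)\to(\Omega^*(M),d)$, and the standard Gysin/Wang computation shows it induces an isomorphism on cohomology: the cohomology of $(H\otimes\bigwedge(x),d)$ fits into the same long exact sequence $\cdots\to H^{k-2}\xrightarrow{\wedge\omega}H^k\to H^k(H\otimes\bigwedge(x),d)\to H^{k-1}\xrightarrow{\wedge\omega}H^{k+1}\to\cdots$ that the Gysin sequence gives for $H^*(M)$, and the two agree.

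For the Sasakian statement, I would invoke the quasi-regularity theorem for Sasakian structures (the result of \cite{OV} cited in Section 4): a Sasakian $M$ admits a \emph{quasi-regular} Sasakian structure, so the space of leaves $N$ is a K\"ahler orbifold with cyclic quotient singularities rather than merely symplectic, with $\omega$ the orbifold K\"ahler form. The rest of the argument is identical, now with $H=H^*(N)$ the cohomology algebra of a simply connected $6$-dimensional K\"ahler orbifold.

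The main obstacle I anticipate is justifying the orbifold-level rational homotopy theory: specifically, that a compact orbifold has a well-behaved de Rham model whose cohomology is the (rational) orbifold cohomology satisfying Poincar\'e duality, and that Theorem \ref{fm2:criterio2} and the formality of low-dimensional simply connected spaces carry over to the orbifold setting. The excerpt flags exactly this by remarking that the proof of \cite{MathZ} uses only the Poincar\'e duality property of $H^*(M)$, which I would lean on to conclude $N$ is formal. A secondary technical point is ensuring the connection form $\eta$ and the pullback $\pi^*$ assemble into a genuine DGA morphism out of $(H\otimes\bigwedge(x),d)$; here one uses that $d\eta=\pi^*\omega$ exactly matches the relation $dx=\omega$ built into the model.
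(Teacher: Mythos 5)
The proposal is correct and takes essentially the same approach as the paper: quasi-regularize the structure, realise $M$ as an orbifold circle bundle over a simply connected $6$-dimensional symplectic (resp.\ K\"ahler) orbifold $N$, use $2$-formality together with the orbifold version of Theorem \ref{fm2:criterio2} to replace a model of $N$ by $(H^*(N),0)$, and tensor with $\bigwedge(x)$, $dx=\omega$. The only cosmetic difference is that you verify the rational-fibration step explicitly via the Gysin sequence, where the paper simply invokes that an $S^1$-bundle is a rational fibration and cites \cite[Theorem 4.3.18]{BG} for the simple connectivity of the base.
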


\begin{proof}
Suppose $M$ admits a Sasakian structure. Then $M$ admits a quasi-regular
Sasakian structure \cite{OV}. Therefore, there is an orbifold circle bundle
 $S^1 \to M \to B$, where $B$ is a compact K\"ahler orbifold of
dimension $6$, with Euler class given by the K\"ahler form $\omega\in H^2(B)$. 
We note that $B$ is simply connected because $M$ is so
(see \cite[Theorem 4.3.18]{BG}). In particular, $S^1 \to M \to B$ is
a rational fibration, hence if $\SM$ is a model for $B$, then 
$\SM \otimes \bigwedge(x)$, with $|x|\,=\,1$, $dx\,=\,\omega$, is a model for $M$.

Now $B$ is a simply connected compact orbifold of dimension $6$.
So it is $2$-formal. Theorem \ref{fm2:criterio2} also holds for orbifolds, hence
$B$ is formal. Therefore $\SM \sim (H,0)$, where $H=H^*(B)$ is the
cohomology algebra of $B$. So a model for $M$ is of the form
$(H\otimes \bigwedge(x),d)$, $dx=\omega \in H^2$.

The case where $M$ admits a K-contact structure is similar.
By Proposition \ref{prop:quasi-regular}, it admits a quasi-regular K-contact
structure. Therefore, $M$ is an orbifold
$S^1$-bundle over a symplectic orbifold $S^1 \to M \to B$, with
Euler class given by the orbifold symplectic form $\omega \in H^2(B)$.
As above, a model for $M$  is
$(H\otimes \bigwedge(x),d)$, $dx=\omega \in H^2$, where
$H=H^*(B)$.
\end{proof}

We prove now Corollary \ref{cor:2}.

\begin{corollary}
 Let $M$ be a simply connected compact K-contact $7$-dimensional manifold. Suppose
that the cup product map $H^2(M)\x H^2(M)\too H^4(M)$ is non-zero. Then $M$ does
not admit a Sasakian structure.
\end{corollary}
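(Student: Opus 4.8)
The plan is to argue by contradiction using the explicit model supplied by Proposition \ref{prop:model}. Suppose $M$ carries a Sasakian structure. Then that proposition gives a model of the form $(H\ox\bigwedge(x),d)$ with $|x|=1$, $dx=\omega\in H^2$, where $H=H^*(B)$ is the cohomology algebra of a simply connected compact K\"ahler $6$-orbifold $B$ and $\omega$ is its K\"ahler class. Since a model is quasi-isomorphic to $(\Omega^*(M),d)$, it computes $H^*(M)$ together with its cup product. As $H$ carries the zero differential, the only nontrivial part of $d$ is $d(\beta x)=\pm\beta\omega$ for $\beta\in H$, so the cohomology is governed by the Gysin-type sequence attached to $L_\omega$.

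Next I would read off $H^2(M)$ and $H^4(M)$ from this complex. Because $B$ is simply connected, $H^1(B)=0$, and by Poincar\'e duality $H^5(B)=0$. A degree-$2$ cocycle is just an element of $H^2$, while the degree-$2$ coboundaries are $\im(L_\omega\colon H^0\to H^2)=\RR\,\omega$; hence $H^2(M)\cong H^2(B)/\RR\,\omega$, represented by classes of $x$-degree $0$. In degree $4$, every element $\alpha+\beta x$ (with $\alpha\in H^4$, $\beta\in H^3$) is closed since $\beta\omega\in H^5=0$, and the coboundaries are exactly $\im(L_\omega\colon H^2(B)\to H^4(B))$, which lie in the $x$-degree-$0$ part.

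Here is where hard Lefschetz enters, and it is the crux of the argument: since $B$ is a compact K\"ahler orbifold, its real cohomology satisfies the hard Lefschetz property, so $L_\omega\colon H^2(B)\to H^4(B)$ is an isomorphism, in particular surjective. Consequently the entire $x$-degree-$0$ summand of $H^4$ is killed in $H^4(M)$, giving $H^4(M)\cong H^3(B)\cdot x$. Finally, I would compute the cup product: given $\alpha,\beta\in H^2(M)$ lifted to $a,b\in H^2(B)$, their product is represented in the model by $ab\in H^4(B)$, an element of $x$-degree $0$. By the previous step this is a coboundary, so $\alpha\cup\beta=0$ in $H^4(M)$. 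Thus the map $H^2(M)\x H^2(M)\too H^4(M)$ vanishes, contradicting the hypothesis, and therefore $M$ admits no Sasakian structure.

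The only real obstacle is conceptual rather than computational: one must notice that products of degree-$2$ classes necessarily land in the $x$-degree-$0$ summand of $H^4(M)$, and that this summand is annihilated precisely by the surjectivity of $L_\omega$ on the K\"ahler base, i.e. by hard Lefschetz for the orbifold $B$. This is exactly what separates the Sasakian case from the merely K-contact (symplectic-base) case, where $L_\omega\colon H^2(B)\to H^4(B)$ need not be surjective and the cup product can survive.
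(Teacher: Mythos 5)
Your proposal is correct and follows essentially the same route as the paper: both use the model $(H\ox\bigwedge(x),\,dx=\omega)$ from Proposition \ref{prop:model}, observe that the product of two degree-$2$ classes lands in $\coker\bigl(L_\omega\colon H^2(B)\to H^4(B)\bigr)$, and kill that cokernel via the hard Lefschetz property of the K\"ahler orbifold base. The only (immaterial) difference is organizational: the paper first does the computation for a general K-contact model and then invokes hard Lefschetz to reach a contradiction, whereas you assume the Sasakian structure from the outset.
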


\begin{proof}
 Let us compute the cohomology of $M$ from its model $(\SM,d)=(H\otimes \bigwedge(x),d)$,
$dx=\omega$, where $H=H^*(B)$ is the cohomology algebra of
a $6$-dimensional simply connected symplectic manifold. Note that $\omega\in H^2$ is
a non-zero element with $\omega^3\in H^6$ generating the top cohomology. 

Consider the Lefschetz map $L_\omega:H^*\to H^{*+2}$, and let $K^*=\ker L_\omega$, $Q^*=\coker L_\omega$.
We have a (non-canonical) isomorphism  $H^i(M) \cong Q^i \oplus K^{i-1} x$. Note that
$Q^3=K^3=H^3$ and $H^6=\RR$. Also $Q^2=H^2/\la \omega \ra$, and $K^4=\ker (L_\omega: H^4\to \RR)$
are vector spaces of codimension one. We have the following:
 \begin{align*}
 H^0(M) &=\RR, \\
 H^1(M) &=0, \\
 H^2(M) &= Q^2 , \\
 H^3(M) &=H^3 \oplus K^2 x, \\
 H^4(M) &=Q^4 \oplus H^3 x, \\
 H^5(M) &=K^4 x, \\
 H^6(M) &=0, \\
 H^7(M) &=\la \omega^3 x\ra.
\end{align*}
The map $H^2(M)\x H^2(M)\to H^4(M)$ factors through $Q^2 \x Q^2 \to Q^4$. Hence if it is non-zero
then $Q^4\neq 0$. In particular, the Lefschetz map $L_\omega:H^2\to H^{4}$ is not an isomorphism,
so $B$ is not hard Lefschetz.

If $M$ admits a Sasakian structure, then there is a quasi-regular fibration $S^1 \to M \to B$ with
$B$ satisfying the hard Lefschetz property (it is a K\"ahler orbifold, so \cite{WZ} is applicable). This contradicts the above.
\end{proof}

Now we shall study the case of Sasakian $7$-manifolds in more detail. Let $M$ be a 
simply connected compact Sasakian $7$-dimensional manifold. Then 
 $$
 \SM=(H\otimes \bigwedge(x),d)
 $$
is a model for $M$, by Proposition \ref{prop:model}, where $H=H^*(B)$ is the cohomology
algebra of a simply connected compact $6$-dimensional K\"ahler orbifold. This algebra $H$
has a very rich structure: 
\begin{enumerate}
\item there is a canonical isomorphism $H^6 \cong \RR$, which is given by integration $\int_M:H^6\to \RR$;
\item $H$ is a Poincar\'e duality algebra, hence $H^3 \otimes H^3 \to \RR$ is an antisymmetric
bilinear pairing;
\item there is a scalar product on each $H^j$. This is given by the Hodge star operator
$* :H^j \to H^{6-j}$ combined with wedge and integration;
\item there is a distinguished element $\omega\in H^2$. This defines the space
of primitive forms $P=\la \omega\ra^\perp \subset H^2$. Hence $H^2=\la \omega\ra \oplus P$;
\item the Lefschetz map $L_\omega:H^2\to H^4$ is an isomorphism. Therefore
$H^4=\la \omega^2\ra \oplus \omega P$. On the other hand, for $\alpha\in P$ we have
$* \alpha=\alpha\wedge \omega$, and $* \omega = \frac12 \omega^2$. This implies
that  $\omega P=\la \omega^2\ra^{\perp}$, and $L_\omega: 
\la \omega\ra \oplus P \to \la \omega^2\ra \oplus \omega P$ is of the form
$L_\omega(\alpha)=L_\omega(\alpha_0+\alpha_1)=\frac12 *\alpha_0+*\alpha_1$,
where $\alpha=\alpha_0+\alpha_1$ is the decomposition according
to $H^2=\la \omega\ra \oplus P$.
\end{enumerate}

The Lefschetz map $L_\omega:H^2\to H^4$ is an isomorphism so there is an inverse
$L_\omega^{-1}:H^4\to H^2$. Using it, we can define a map 
$\SF: P \x P\x P\x P \to \RR$ by
 $$
 \SF(\alpha,\beta,\gamma,\delta)=\int_M L_\omega^{-1}(\alpha\wedge \beta) \wedge \gamma\wedge \delta .
 $$
This clearly factors through $\Sym^2P \x \Sym^2P$. Using (5) above, we have the alternative 
description
 $$
 \SF(\alpha,\beta,\gamma,\delta)= 2\la (\alpha\wedge \beta)_0, (\gamma\wedge \delta)_0 \ra +
\la (\alpha\wedge \beta)_1, (\gamma\wedge \delta)_1 \ra,
 $$ 
from where it follows that $\SF$ factors as a map\, $\Sym^2(\Sym^2P)\to \RR$.

Let $\SK_M$ be the kernel of the map $\Sym^2(\Sym^2 P)\to \Sym^4 P$. Then we define a map 
  $$
 \SF_M=\SF|_{\SK_M}: \SK_M\to \RR.
 $$
We have the following result.

\begin{theorem} \label{thm:formal-7-dim}
   Let $M$ be a simply connected compact Sasakian $7$-dimensional manifold.
 Then $M$ is formal if and only if $\SF_M=0$.
\end{theorem}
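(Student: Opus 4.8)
The plan is to reduce formality to $3$-formality via Theorem \ref{fm2:criterio2}, and then to test condition $(3)$ of Definition \ref{def:primera} directly on the model $\SM=(H\ox\bigwedge(x),d)$, $dx=\omega$, of Proposition \ref{prop:model}, where $H=H^*(B)$ satisfies hard Lefschetz.

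First I would construct the minimal model $\rho\colon(\bigwedge V,d)\to\SM$ through degree $3$. Since $H^1(\SM)=0$ and $H^2(\SM)\cong P=\la\omega\ra^{\perp}$, one has $V^1=0$ and $V^2=C^2$ with closed generators $a_i\mapsto\alpha_i$ running over a basis of $P$. In degree $3$ one needs closed generators $C^3\cong H^3$ together with a space $N^3$ killing $\ker\big(H^4(\bigwedge V^{\le2})\to H^4(\SM)\big)$. Here hard Lefschetz is decisive: $H^4=\omega\cdot H^2$, so each $\alpha_i\wedge\alpha_j$ equals $\omega\,\mu_{ij}$ with $\mu_{ij}=L_\omega^{-1}(\alpha_i\wedge\alpha_j)$, whence $\rho(a_ia_j)=(\alpha_i\wedge\alpha_j)\ox1=d(\mu_{ij}\ox x)$ is exact. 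Thus $\rho$ annihilates all of $H^4(\bigwedge V^{\le2})=\Sym^2P$, forcing $N^3\cong\Sym^2P$ with generators $b_\alpha$, $db_\alpha=e_\alpha$ (the corresponding quadric in the $a_i$) and $\rho(b_\alpha)=\mu_\alpha\ox x$. As the $b_\alpha$ span the only non-closed part of $V^3$, the splitting $V^3=C^3\oplus N^3$ is forced up to the choice of complement, and $I_3=I(N^3)$.

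Next I would exploit that $\rho$ is a quasi-isomorphism: a cycle $z\in I_3$ is exact in $\bigwedge V$ iff $[\rho(z)]=0$ in $H^*(\SM)$. Because every generator of $N^3$ carries the factor $x$ and $x^2=0$, we have $\rho(I_3)\subseteq H\ox x$; and since $d(\SM)\subseteq H\ox1$, no nonzero element of $H\ox x$ is exact. Hence condition $(3)$ of $3$-formality is equivalent to $\rho(z)=0$ for every cycle $z\in I_3$. Writing $\rho(z)\in H^{|z|-1}\ox x$ and using $H^1=H^5=H^7=0$, the class $\rho(z)$ can be nonzero only for $|z|=5$ and $|z|=7$ (in degree $3$ the only cycle of $I_3$ is $0$, and $I_3$ has no elements of degree $4$), so only these two degrees must be analysed. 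In degree $7$ the space $I_3$ is spanned by the $b_\alpha a_ia_j$, and $z=\sum t^{\alpha\beta}b_\alpha e_\beta$ is a cycle precisely when $t\in\ker(\Sym^2P\ox\Sym^2P\to\Sym^4P)$. A computation gives $\rho(z)=h_z\ox x$ with $h_z\in H^6$ whose image under $H^6\cong\RR$ is $\SF(t)$, straight from the definition of $\SF$; since $\SF$ is symmetric, $\SF(t)=\SF_M(t_{\mathrm{sym}})$ with $t_{\mathrm{sym}}\in\SK_M$, and as $t$ ranges over all cycles $t_{\mathrm{sym}}$ ranges over all of $\SK_M$, so every degree-$7$ cycle is exact iff $\SF_M=0$. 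The degree-$5$ cycles are those $\sum s^{\alpha i}b_\alpha a_i$ with $\sum s^{\alpha i}e_\alpha a_i=0$ in $\Sym^3P$ (the Koszul syzygies of $\Sym P$); pairing their classes in $H^4\ox x$ against $H^2$ again produces values of $\SF_M$ on elements of $\SK_M$, which vanish once $\SF_M=0$. This yields both implications: if $\SF_M=0$ every cycle of $I_3$ maps to $0$, so $M$ is $3$-formal, hence formal by Theorem \ref{fm2:criterio2}; if $\SF_M(\kappa)\neq0$ for some $\kappa\in\SK_M$, the cycle $z=\sum\kappa^{\alpha\beta}b_\alpha e_\beta$ has $\rho(z)\neq0$, obstructing $3$-formality. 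One checks the obstruction is independent of the choice of complement $N^3$, since replacing $b_\alpha$ by $b_\alpha+(\text{element of }C^3)$ changes $\rho(z)$ only by a term in $H^3\cdot H^4\subseteq H^7=0$.

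The main obstacle will be the bookkeeping in the last step: showing carefully that the cycle equation forces the relevant coefficient tensor into $\ker(\Sym^2P\ox\Sym^2P\to\Sym^4P)$, that its antisymmetric part corresponds exactly to the exact cycles coming from $\bigwedge^2N^3$, and that the pairing of $\rho(z)$ with Poincar\'e duals reproduces the functional $\SF$ with the correct symmetrisations, so that the surviving obstruction is precisely $\SF_M$ on $\SK_M$ and the degree-$5$ syzygies contribute nothing new.
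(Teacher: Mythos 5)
Your proposal is correct and follows essentially the same route as the paper: reduce to $3$-formality via Theorem \ref{fm2:criterio2}, build the degree-$\le 3$ minimal model with $V^2=P$ and $N^3=\Sym^2P$ using hard Lefschetz, identify the degree-$7$ cycles of $I(N^3)$ with $\ker(\Sym^2P\ox\Sym^2P\to\Sym^4P)$ on which $\rho$ computes $\SF$, discard the antisymmetric part, and check independence of the splitting. The only cosmetic difference is in degree $5$, where the paper pairs a nonzero class against $H^2(M)$ by Poincar\'e duality to push it into degree $7$, while you describe the degree-$5$ syzygies explicitly before pairing --- both reduce to the same $\SF_M$ obstruction.
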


\begin{proof}
Using Theorem \ref{fm2:criterio2}, we only have to check whether $M$ is $3$-formal. For this we have to construct the minimal 
model $\rho:(\bigwedge V,d) \to \SM= (H\otimes \bigwedge(x),d)$ up to degree $3$. This is easy:
  \begin{align*}
  V^1 &=0, \\
 V^2 &= P , \\
 V^3 &= H^3 \oplus N^3, \qquad \text{where } N^3=\Sym^2P,
\end{align*}
where the differential is given by $d=0$ on $P$ and $H^3$, and
$d:N^3 \to \bigwedge V^2$ is the isomorphism $\Sym^2P \to \bigwedge^2 P$.
The map $\rho$ is given as follows.
$\rho: V^2 =P\to  \SM^2=H^2$ is defined as the obvious (inclusion) map, 
$\rho: H^3 \to \SM^3=H^3 \oplus H^2 x$ is the inclusion on the first summand,
and $\rho:N^3=\Sym^2P \to  \SM^3=H^3 \oplus H^2 x$ is defined as
$\rho(\alpha\cdot\beta)=L_{\omega}^{-1}(\alpha\wedge \beta) \, x$. Note that 
 $$
 d( \rho(\alpha\cdot\beta))=L_{\omega}^{-1}(\alpha\wedge \beta)  \, dx=
L_{\omega}^{-1}(\alpha\wedge \beta) \, \omega= \alpha\wedge \beta
 =\rho(\alpha)\wedge \rho(\beta)=\rho(\alpha\wedge \beta)=
\rho(d(\alpha\cdot\beta)),
$$
so $\rho$ is a DGA map. Clearly it is a $3$-equivalence (it induces an isomorphism
on cohomology up to degree $3$ and an inclusion on degree $4$).

The space of closed elements is $C^3=H^3$. Now let us check when the elements $z\in I(N^3)$ with $dz=0$ satisfy
$[\rho(z)]=0\in H^*(M)$. The only cases to check is when $z$ has degree $5$ or $7$.
If $z$ has degree $5$, then $[\rho(z)]\neq 0$ if and only if there exists
some $\beta\in P$, $[\rho(\beta)]\in H^2(M)$, such that $[\rho(z)]\wedge [\rho(\beta)]\neq 0$,
by Poincar\'e duality. Hence $[\rho(z\beta)]\neq 0$. This means that we can restrict 
to elements $z$ of degree $7$, that is $z \in N^3\cdot \bigwedge^2 P$.

Let $z \in N^3\cdot \bigwedge^2 P \cong \Sym^2P \x \Sym^2 P$. Then the
map $d:  N^3\cdot   \bigwedge^2P  \to  \bigwedge^4P$ coincides the full
symmetrization map $ \Sym^2P \x \Sym^2 P \to \Sym^4P$. So
 $$
 Z= \ker d|_{I(N^3)^7}=\SK_M \oplus \Ant^2(\Sym^2P),
 $$
where $\Ant^2(W)$ denotes the antisymmetric $2$-power of a vector space $W$.

Now we have to study the map 
 $$
 \rho: Z \to H^7(M) =H^6 x,
 $$
and see if this is non-zero. This is given (on the basis elements) by 
$$
 \rho ((\alpha\cdot\beta)\cdot (\gamma\cdot \delta))=
\left(L_{\omega}^{-1}(\alpha\wedge \beta)  \wedge \gamma\wedge\delta \right) x,
 $$
so $\SF_M=\rho|_{\SK_M}$. Note that $\rho$ automatically vanishes on $ \Ant^2(\Sym^2P)$,
hence $M$ is formal if and only if $\rho$ vanishes on $\SK_M$ if and only if $\SF_M=0$.

According to Theorem \ref{fm2:criterio2}, to check non-formality we have to test the relevant property (2)
on \emph{any} splitting $V^3=C^3+ N'{}^3$.
If we take another splitting $V^3=C^3+ N'{}^3$, then the projection $\pi:V^3\to N^3$ gives an
isomorphism $\pi:N'{}^3 \to N^3$, and so an isomorphism $N'{}^3\cdot \Sym^2P \cong N^3 \cdot \Sym^2P$.
Clearly, $d\circ \pi=d$ on $N'{}^3$, so the spaces of cycles correspond
$\SK' \cong \SK$. On the other hand $H^3 \cdot H^2\cdot H^2 =0$, so the maps 
$\rho:\SK\to H^6 x$ and $\rho:\SK'\to H^6 x$ also correspond. This means that 
the corresponding
$\SF$ and $\SF'$ coincide under the isomorphism $\SK\cong \SK'$. This means that
the choice of splitting is not relevant.
\end{proof}

This result means that the formality or non-formality of $M$ only depends on the cohomology algebra $H$.
Theorem \ref{thm:formal-7-dim} can be applied to the examples in Section 5.3 of
\cite{BFMT}. For instance for $B=\CP^1\x \CP^1\x \CP^1$, we have
a  simply connected Sasakian $7$-manifold which is non-formal (Theorem 12 of \cite{BFMT}). 
For $B=\CP^3$, we have obviously $P=0$ and hence $M$ is formal.

The element $\SF_M$ of Theorem \ref{thm:formal-7-dim}
is the {\it principal Massey product\/} defined by Crowley and Nordstr\"om in \cite{Johannes} 
for simply connected compact $7$-manifolds in general. The principal Massey product is the full obstruction
to formality for simply connected compact $7$-manifolds.

Now we deduce Corollary  \ref{cor:formal-7-dim}.

\begin{corollary}
 Let $M$ be a simply connected compact Sasakian $7$-dimensional manifold.
 Then $M$ is formal if and only if all triple Massey products are zero.
\end{corollary}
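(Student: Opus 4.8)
The plan is to deduce the corollary directly from Theorem~\ref{thm:formal-7-dim}, by identifying the invariant $\SF_M$ with the totality of triple Massey products on $M$. The forward direction is free: if $M$ is formal, then by Proposition~\ref{prop:criterio1} all Massey products (of any order) vanish, so in particular all triple Massey products vanish. The substance is the converse, and the strategy is to show that the obstruction $\SF_M$ of Theorem~\ref{thm:formal-7-dim} is precisely encoded by triple Massey products, so that vanishing of the latter forces $\SF_M=0$ and hence formality.

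To make this identification, I would unwind the construction of $\SF_M$ in terms of the minimal model built in the proof of Theorem~\ref{thm:formal-7-dim}. Recall that the generators of degree $3$ split as $V^3=H^3\oplus N^3$ with $N^3=\Sym^2P$ and $d:N^3\to\bigwedge^2 P$ the canonical isomorphism; concretely, for $\alpha,\beta\in P=V^2$ the element $\alpha\cdot\beta\in N^3$ satisfies $d(\alpha\cdot\beta)=\alpha\wedge\beta$. This is exactly the data of a Massey product: since $[\alpha\wedge\beta]=L_\omega([\,\cdot\,])$ need not vanish in general, but $\alpha,\beta$ are classes in $H^2(M)$ whose relevant products are controlled by the primitive decomposition, the triple Massey product $\langle\alpha,\beta,\gamma\rangle$ is represented by $L_\omega^{-1}(\alpha\wedge\beta)\wedge\gamma$ together with a symmetric correction term. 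Pairing with a fourth class $\delta\in P$ and integrating then reproduces exactly
\[
\SF(\alpha,\beta,\gamma,\delta)=\int_M L_\omega^{-1}(\alpha\wedge\beta)\wedge\gamma\wedge\delta,
\]
which is the defining formula for $\SF$. Thus each value of $\SF$ on a decomposable element of $\Sym^2(\Sym^2P)$ is literally a pairing of a triple Massey product with an $H^2$-class.

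The key point to establish carefully is that $\SF_M$ vanishes if and only if \emph{all} triple Massey products vanish, not merely those of the special form above. Here I would argue that $\SF_M$ is the restriction of $\SF$ to $\SK_M$, the kernel of the symmetrization $\Sym^2(\Sym^2P)\to\Sym^4 P$, and that this kernel is spanned exactly by the combinations that arise as genuine (well-defined, indeterminacy-free) triple Massey products: an element lies in $\SK_M$ precisely when the naive product $\alpha\wedge\beta$ in the relevant degree is exact in the model, which is the condition for the corresponding Massey product to be defined. Since every triple Massey product on $M$ in the surviving degrees factors through this construction (the cohomology having been computed explicitly in Corollary~\ref{cor:2}, with $H^1=H^6=0$ restricting where nontrivial products can live), the collection of triple Massey products is faithfully recorded by $\SF_M$. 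Therefore $\SF_M=0$ is equivalent to the vanishing of all triple Massey products, and combining with Theorem~\ref{thm:formal-7-dim} gives the corollary.

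The main obstacle I anticipate is the bookkeeping of Massey product indeterminacy and the verification that $\SK_M$ matches the set of \emph{defined} triple products exactly — one must check that the symmetrization kernel condition coincides with the condition that the intermediate cup products vanish, and that the symmetry/antisymmetry splitting $Z=\SK_M\oplus\Ant^2(\Sym^2P)$ correctly discards the terms on which $\rho$ automatically vanishes. This is essentially a translation between the minimal-model language of Theorem~\ref{thm:formal-7-dim} and the classical Massey product language, and the alternative—simply invoking the identification of $\SF_M$ with the principal Massey product of Crowley and Nordstr\"om \cite{Johannes}, which is stated to be the full obstruction to formality for simply connected $7$-manifolds—would give a one-line proof once that identification is granted.
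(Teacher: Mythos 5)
Your overall strategy is the paper's: the forward direction is standard, and the converse is obtained by combining Theorem~\ref{thm:formal-7-dim} with an identification of $\SF_M$ with triple Massey products of degree-$2$ classes paired against $H^2(M)$. The paper makes this identification concrete: it picks an orthonormal basis $e_0=\frac{1}{\sqrt 3}\omega, e_1,\dots,e_m$ of $H^2$, observes that $\SK_M$ is spanned by the explicit elements $a_{ijkl}=(e_i\cdot e_j)\cdot(e_k\cdot e_l)-(e_k\cdot e_j)\cdot(e_i\cdot e_l)$, computes $\SF_M(a_{ijkl})$ in terms of the structure constants $\lambda_{ijt}=\int_M e_i\wedge e_j\wedge e_t$, and cites \cite{Johannes} for the identity $\SF_M(a_{ijkl})=\la e_i,e_j,e_k\ra\cup e_l$ (and for the well-definedness of $\la e_i,e_j,e_k\ra$ as an element of $H^5(M)$). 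So if $\SF_M\neq 0$, some $a_{ijkl}$ detects a nonzero triple Massey product.

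The genuine gap in your write-up is the mechanism you propose for matching $\SK_M$ with Massey products. You assert that ``an element lies in $\SK_M$ precisely when the naive product $\alpha\wedge\beta$ in the relevant degree is exact in the model, which is the condition for the corresponding Massey product to be defined.'' This is not what $\SK_M$ is, and the claimed equivalence cannot hold: for a Sasakian $M$ the hard Lefschetz property forces $Q^4=0$, so the cup product $H^2(M)\times H^2(M)\to H^4(M)$ vanishes identically and \emph{every} triple product $\la a,b,c\ra$ with $a,b,c\in H^2(M)$ is defined --- definedness imposes no condition at all. The kernel $\SK_M$ of $\Sym^2(\Sym^2P)\to\Sym^4P$ is instead the space of \emph{closed} degree-$7$ elements of the ideal $I(N^3)$ (modulo the antisymmetric part, on which $\rho$ vanishes automatically); it records which linear combinations of the products $\xi_{ij}\cdot e_k e_l$ are cocycles, i.e.\ which combinations of the individual integrals $\SF(e_i,e_j,e_k,e_l)$ are choice-independent obstructions. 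Without identifying an explicit spanning set of $\SK_M$ such as the $a_{ijkl}$ and proving the identity $\SF_M(a_{ijkl})=\la e_i,e_j,e_k\ra\cup e_l$ (for which one must also check that the indeterminacy $e_i\cup H^3(M)+H^3(M)\cup e_k$ pairs trivially with $e_l$, which again uses $e_k\cup e_l=0$ in $H^4(M)$), the step ``all triple Massey products vanish $\Rightarrow$ $\SF_M=0$'' is not established. Your closing remark --- that one could instead invoke the Crowley--Nordstr\"om identification of $\SF_M$ with the principal Massey product --- is essentially the route the paper takes for the key identity, but even then the explicit generators of $\SK_M$ are needed to convert ``$\SF_M\neq0$'' into ``some specific triple product is nonzero.''
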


\begin{proof}
 Suppose that $\SF_M\neq 0$. We choose an orthonormal basis
for $H^2=\la e_0,e_1,\ldots, e_m\ra$, where $e_0=\frac{1}{\sqrt{3}} \omega$,
and $\la e_1,\ldots,e_m\ra =P$. The vector space $\SK_M$ is 
generated by elements of the form
 $$
 a_{ijkl}= (e_i \cdot e_j)\cdot (e_k\cdot e_l) - (e_k \cdot e_j)\cdot (e_i\cdot e_l),
 $$
for $1\leq i,j,k,l \leq m$ (here, as usual, the dot product means symmetric product).
Now define the numbers
 $$
 \lambda_{ijk}= \int_M e_i\wedge e_j\wedge e_k \in \RR,
 $$
for $1\leq i,j,k \leq m$. Note that these numbers are fully symmetric on $i,j,k$. 
Also $\lambda_{000}= \frac{2}{\sqrt{3}}$ and $\lambda_{ij0}=\frac{1}{\sqrt{3}}\delta_{ij}$, for $(i,j)\neq (0,0)$.
Then 
 $$
 L_\omega^{-1}(e_i\wedge e_j)=2*(e_i\wedge e_j)_0 +(e_i\wedge e_j)_1=2\lambda_{ij0}e_0 +\sum_{t>0} \lambda_{ijt}e_t.
 $$
So
 $$
\SF_M ( (e_i \cdot e_j)\cdot (e_k\cdot e_l))=2\lambda_{ij0}\lambda_{kl0} +\sum_{t>0}\lambda_{ijt}\lambda_{klt} \, .
 $$
Evaluating $\SF_M$ on $a_{ijkl}$ gives a set of equations to determine the formality of $M$.
$M$ is non-formal when there exists some $a_{ijkl}$ with $\SF_M(a_{ijkl})\neq 0$.
By \cite{Johannes}, we have that the triple Massey product $\la e_i, e_j, e_k\ra$ is a well-defined element of $H^5(M)$ and
it satisfies
 $$
 \SF_M(a_{ijkl}) =\la e_i, e_j, e_k\ra \cup e_l\, .
 $$
So $\la e_i, e_j, e_k\ra\neq 0$, as required.
\end{proof}

This result is of relevance since it is not known if   
for general simply connected compact $7$-dimensional manifolds  there
are obstructions to formality different from triple Massey products,
as remarked in \cite{Johannes}. It is true that for higher
dimensional manifolds, there are obstructions to formality
even when all Massey products (triple and higher order) can be zero.

\end{document}